\newcommand\dist{{\textrm{dist}}}
\theoremstyle{plain}
\newtheorem{thm}{Theorem}
\newtheorem{prop}[thm]{Proposition}
\newtheorem{lem}[thm]{Lemma}
\newtheorem{obs}[thm]{Observation}
\theoremstyle{definition}
\newtheorem*{keydef}{Key Definition}
\theoremstyle{remark}
\newcommand{\card}[1]{\left|#1\right|}
\def\chiD{{\chi_D}}
\def\chiDl{{\chi_D^{\ell}}}
\def\aut{{\textrm{Aut}}}
\newcommand{\vph}{\varphi}
\newcommand{\aside}[1]{\marginnote{\scriptsize{#1}}[0cm]}
\newcommand{\Emph}[1]{\emph{#1}\aside{#1}}
\title{Proper Distinguishing Colorings with Few Colors\\ for Graphs with Girth at
Least 5} 
\author{Daniel W. Cranston\thanks{Department of Mathematics and Applied
Mathematics, Virginia Commonwealth University, Richmond, VA;
\texttt{dcranston@vcu.edu}; 
This research is partially supported by NSA Grant 
H98230-16-0351.}
}
\begin{document}

\maketitle
\begin{abstract}
The distinguishing chromatic number, $\chiD(G)$, of a graph $G$ is
the smallest number of colors in a proper coloring, $\vph$, of $G$, such that
the only automorphism of $G$ that preserves all colors of $\vph$ is the
identity map.  Collins and Trenk conjectured that if $G$ is connected with
girth at least 5 and $G\ne C_6$, then $\chiD(G)\le \Delta+1$.  We prove this
conjecture.
\end{abstract}
\bigskip

\section{Introduction}
A \emph{$k$-coloring} $\vph$ of a graph $G$ is a map $\vph:V(G)\to
\{1,\ldots,k\}$.  If $\vph(v)\ne \vph(w)$ whenever $vw\in E(G)$, then $\vph$
is \emph{proper}.  A vertex, $w$, is \Emph{fixed} by a coloring $\vph$ if every
automorphism $f$ that preserves all colors (that is $\vph(f(v))=\vph(v)$ for all
$v\in V(G)$) has $f(w)=w$.
A coloring $\vph$ of a graph $G$ is 
\Emph{distinguishing} if it fixes every vertex; equivalently,
if the only automorphism $f$ of $G$ with $\vph(f(v))=\vph(v)$
for every vertex $v$ is the identity map.  
The \emph{distinguishing chromatic number}, \Emph{$\chiD(G)$}, is the
smallest number of colors in a proper distinguishing coloring.
In a breadth-first (search) spanning
tree, \Emph{level $i$} is the set of vertices at distance $i$ from the root.
We write $\Delta(G)$ for the maximum degree of a graph $G$, and write
$\Delta$ when $G$ is clear from context.

The distinguishing chromatic number was introduced by Collins and
Trenk~\cite{CollinsT06}.  They showed that $\chiD(G)\le 2\Delta$ for every
connected graph $G$, with equality only if $G\in\{K_{\Delta,\Delta},C_6\}$.
Further, they conjectured that no connected graph has $\chiD(G)=2\Delta-1$.
Laflamme and Seyffarth~\cite{LS} confirmed this conjecture for bipartite graphs,
with the exception of $K_{\Delta,\Delta-1}$.
For every tree $T$, Collins and Trenk proved $\chiD(T)\le \Delta+1$.  Collins,
Hovey, and Trenk~\cite{CHT} studied $\chiD(G)$ in terms of $\aut(G)$, the
automorphism group of $G$.
Cavers and Seyffarth~\cite{CS} characterized graphs $G$ with
$\chiD(G)\in\{|V(G)|-1,|V(G)|-2\}$.  Choi, Hartke, and Kaul~\cite{CHK} studied
$\chiD$ for cartesian products of graphs, and Cheng~\cite{cheng} studied it for
interval graphs.

The goal of this note is to prove Theorem~\ref{mainthm}, which was
conjectured\footnote{Through a minor oversight, the original conjecture did not
exclude $C_6$.} by Collins and Trenk~\cite{CollinsT06}, and proved for bipartite
graphs by Alikhani and Soltani~\cite{AS}.

\begin{thm}
If $G$ is connected with girth at least 5 and $G\ne C_6$, then $\chiD(G)\le \Delta+1$.
\label{mainthm}
\end{thm}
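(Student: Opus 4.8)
The plan is to fix a root $r$, build a breadth-first spanning tree with levels $L_0, L_1, \ldots$, and color the vertices one level at a time, using the girth condition to control how a color-preserving automorphism can act. The single structural fact I would lean on throughout is that girth at least $5$ forbids $4$-cycles, so \emph{any two distinct vertices have at most one common neighbor}. In particular, a vertex $v \in L_i$ with two or more neighbors in $L_{i-1}$ is the \emph{unique} vertex at its level with that set of lower neighbors, since a second such vertex would complete a $4$-cycle. I would call $v$ a \emph{private child} of $u$ if $u$ is its only neighbor in $L_{i-1}$, and the design goal of the coloring is the invariant (P): \emph{for every vertex $u$, the private children of $u$ receive pairwise distinct colors}.

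Next I would prove the propagation lemma: if $\vph$ is proper, satisfies (P), and $f$ is a color-preserving automorphism with $f(r)=r$, then $f$ is the identity. Since $f$ fixes $r$ it preserves $\dist(r,\cdot)$ and so maps each level to itself. Arguing by induction on $i$, suppose $f$ fixes $L_0 \cup \cdots \cup L_{i-1}$ pointwise and take $v \in L_i$. Then $f(v) \in L_i$, and $f$ fixes the neighbors of $v$ in $L_{i-1}$, so $v$ and $f(v)$ share those lower neighbors. If $v$ has at least two lower neighbors, the uniqueness above forces $f(v)=v$; if $v$ is a private child of some $u$, then $f(v)$ is also a private child of $u$ with $\vph(f(v))=\vph(v)$, so (P) forces $f(v)=v$. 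Hence $f$ fixes $L_i$, completing the induction.

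The third step is to build a proper $(\Delta+1)$-coloring satisfying (P), coloring level by level in BFS order. The private children of a given $u$ are pairwise nonadjacent (a common neighbor $u$ plus an edge between them is a triangle), and for $u \ne r$ the parent of $u$ is not a private child, so $u$ has at most $\Delta-1$ private children, while the root has at most $\Delta$; since one color is blocked by $\vph(u)$, there is in each case just enough room to give the private children distinct colors. The delicate point is that a private child $v$ of $u$ may also have neighbors in its own level, so the sibling-distinctness constraint (a virtual clique on the private children of $u$) must hold simultaneously with properness along same-level edges. I would phrase this as a list-coloring problem on each level --- lists of size $\Delta+1$ minus the colors already used on $v$'s lower neighbors, on the graph formed by the same-level edges together with the private-sibling cliques --- and argue solvability by ordering each level carefully, coloring each private child before its same-level neighbors where possible and exploiting that a same-level neighbor of $v$ is never itself a child of $u$.

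The hard part, and the main obstacle, is forcing the root itself to be fixed, so that the propagation lemma leaves no surviving automorphism. When $G$ is not regular I would exploit the slack: choosing $r$ suitably and reserving color $\Delta+1$ for $r$ alone makes $r$ the unique vertex of its color, which fixes it; this requires a proper $\Delta$-coloring of $G-r$ compatible with (P), available by Brooks-type reasoning away from the tight components. The genuinely hard case is when $G$ is $\Delta$-regular, where no color can be reserved and no vertex is singled out by degree; there I would need a separate argument killing any automorphism that moves $r$, perturbing $\vph$ near $r$ while preserving (P) and properness. This is exactly where the bound is tight and where $C_6$ must be excluded: for $C_6$ one has $\Delta+1=3$ but $\chiD(C_6)=4$, so the regular case cannot be pushed through uniformly, and the excluded example pinpoints the single obstruction the argument must sidestep.
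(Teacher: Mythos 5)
Your first two steps reproduce the paper's framework faithfully: the propagation lemma (an automorphism fixing the root fixes everything, using that girth $5$ means two vertices share at most one neighbor) is essentially the paper's Lemma~\ref{mainlem}, and your treatment of the non-regular case by reserving color $\Delta+1$ for a low-degree root is its Claim~\ref{clm1}. But the proposal has a genuine gap exactly where you flag ``the hard part'': when $G$ is $\Delta$-regular you say you ``would need a separate argument killing any automorphism that moves $r$, perturbing $\vph$ near $r$,'' and no such argument is given. This is not a routine perturbation; it is the substance of the theorem. The paper spends six of its seven claims on it: it shows a minimal regular counterexample has diameter at most $3$ (by coloring two vertices $\Delta+1$ and arranging that the multisets of colors on their neighborhoods differ), then that $\Delta=3$, then that $G$ is bipartite or vertex-transitive, then that $G$ must be the Petersen or Heawood graph, and finally exhibits explicit distinguishing $4$-colorings of those two graphs. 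Nothing in your outline points toward any of this, and a local perturbation near $r$ cannot work uniformly (vertex-transitive examples like Petersen give no distinguished vertex to perturb around), so the regular case is simply unproven.

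There is also a quantitative problem in your step 3. Your invariant (P) demands distinct colors on \emph{all} private children of $u$, even those with already-colored same-level neighbors. Such a vertex $v$ must then avoid $\vph(u)$, up to $\Delta-2$ sibling colors, and up to $\Delta-1$ colors on same-level neighbors --- potentially $2\Delta-2$ forbidden colors against a palette of $\Delta+1$ --- and your proposed ordering (``color each private child before its same-level neighbors'') is impossible when two adjacent same-level vertices are private children of different parents. The paper avoids this by weakening the requirement: a child with an earlier-colored non-parent neighbor is already pinned down by its two fixed neighbors (no $4$-cycles), so only siblings \emph{without} such a neighbor need pairwise distinct colors, and those face at most $\Delta-1$ constraints. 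You would need to adopt that weaker invariant for your level-by-level coloring to go through.
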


Theorem~\ref{mainthm} is best possible in two ways.  First, the star $K_{1,\Delta}$
requires $\Delta+1$ colors.  In fact, Collins and Trenk characterized the trees
$T$ for which $\chiD(T)=\Delta(T)+1$, and there are an infinite number of these
for each $\Delta$.  Thus, we cannot improve the upper bound on $\chiD$, even if
we require larger girth, or impose some requirement on $\Delta$.  Second, for
each $\Delta\ge 3$, they constructed infinitely many graphs $G$ with girth 4,
maximum degree $\Delta$, and $\chiD(G)=2\Delta-2$.  Thus, we cannot relax the
girth bound.

\begin{keydef}
Let $G$ be a connected graph of girth at least 5, and $w\in V(G)$.
Let $T$ be a breadth-first spanning tree, rooted at $w$.  Let $\sigma$ denote
the vertex order of $T$, and for each $v\in V(G)$, let $\sigma_v$ denote the set
of vertices that precede $v$ in $\sigma$.  When $\sigma'$ is a prefix of $\sigma$,  
we often slightly abuse notation by writing $\sigma'$ to mean the set of
vertices in that order.  
Let $\vph'$ be a proper coloring of the
subgraph induced by some $\sigma'$.  We say \Emph{color
greedily}, with respect to $\sigma$ and $\vph'$, to mean that we
extend $\vph'$ to a proper coloring $\vph$ by coloring each $v 
\in \sigma\setminus \sigma'$ in order, subject to the following two constraints.
\textbf{(i)} If $v$ has a neighbor in $\sigma_v$ (other than
its parent), then $\vph(v)$ is the smallest color not used on $N(v)\cap
\sigma_v$. 
\textbf{(ii)} If $v$ has no neighbor in $\sigma_v$ (other than its parent), then
$\vph(v)$ is the smallest color not used on $\sigma_v \cap S_v$,
where $S_v$ consists of the siblings and parent of $v$.
\end{keydef}

We begin with the following easy lemma. Its proof draws on ideas from a similar
result in~\cite{CollinsT06}, where $G$ has no girth constraint.

\begin{lem}
\label{mainlem}
Let $G$ be a connected graph with girth at least 5, and $w\in V(G)$.  Let $T$ be a
breadth-first spanning tree, rooted at $w$, with vertex order $\sigma$.  
Let $\vph$ be a greedy coloring of $G$, with respect to $\sigma$ and a coloring
$\vph'$ of some non-empty prefix $\sigma'$ of $\sigma$.  If $\vph$ fixes each
vertex in $\sigma'$, then $\vph$ is a proper distinguishing coloring.
\end{lem}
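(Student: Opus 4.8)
The plan is to establish properness and the distinguishing property separately. Properness is immediate from the two greedy rules: under rule (i) we avoid every color on $N(v)\cap\sigma_v$, and under rule (ii) the unique neighbor of $v$ in $\sigma_v$ is its parent $p$, whose color is avoided since $p\in\sigma_v\cap S_v$; an edge from $v$ to a later vertex is handled when that later endpoint is colored. So the real content is showing that the identity is the only color-preserving automorphism.

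For that I would take an arbitrary color-preserving automorphism $f$ and let $v$ be the first vertex in the order $\sigma$ with $f(v)\neq v$ (if there is none, $f$ is the identity). By hypothesis $f$ fixes every vertex of $\sigma'$, so $v\in\sigma\setminus\sigma'$ and $v$ was colored greedily; moreover $f$ fixes $\sigma_v$ pointwise, hence so does $f^{-1}$, and since $v\notin\sigma_v$ we get $f(v)\notin\sigma_v$. Writing $p$ for the parent of $v$, we have $p\in\sigma_v$, so $f(p)=p$, and therefore $f(v)$ is a neighbor of $p$ lying in $\sigma\setminus\sigma_v$ with $\vph(f(v))=\vph(v)$. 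The goal is to force $f(v)=v$, contradicting the choice of $v$.

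If $v$ falls under rule (i) this is quick: $v$ has a neighbor $u\neq p$ with $u\in\sigma_v$, so $f(u)=u$; since $u\sim v\sim p$ and girth at least $5$ forbids the triangle $uvp$, the pair $u,p$ is non-adjacent, and both $v$ and $f(v)$ are common neighbors of $u$ and $p$. Two distinct common neighbors of a non-adjacent pair would create a $4$-cycle, contradicting the girth; hence $f(v)=v$.

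The main obstacle is rule (ii), where $p$ is the only neighbor of $v$ in $\sigma_v$ and there is no second fixed neighbor to exploit. Here I would enumerate the candidates for $f(v)$: being a neighbor of $p$ outside $\sigma_v$, it is either $v$ itself, a later child $c_j$ of $p$, or a vertex $v^*$ at the level of $v$ that is adjacent to $p$ but whose own parent $q$ differs from $p$. The last type is excluded because $q$ sits at the previous level, hence in $\sigma_v$, so $f^{-1}(v^*)=v$ would be adjacent to $f^{-1}(q)=q$, contradicting that $p$ is the only predecessor-neighbor of $v$. For a later child $c_j$ colored under rule (ii), that rule gives $c_j$ the smallest color avoiding $p$ and all earlier siblings of $c_j$, among which $v$ appears, so $\vph(c_j)\neq\vph(v)$. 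The delicate remaining possibility is a later child $c_j$ colored under rule (i): it has a neighbor $y\neq p$ preceding it. If $y\in\sigma_v$ then $f^{-1}(c_j)=v$ is adjacent to $y$, again contradicting rule (ii); if $y\notin\sigma_v$ then $y$ lies strictly between the two children $v=c_i$ and $c_j$ of $p$, yet $y$ cannot itself be a child of $p$ (siblings are non-adjacent by girth), which is impossible because the children of a fixed vertex occupy a consecutive block of the breadth-first order $\sigma$. With every alternative eliminated we get $f(v)=v$, contradicting the choice of $v$ and completing the proof.
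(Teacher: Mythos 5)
Your proposal is correct and takes essentially the same route as the paper: the parent of the first unfixed vertex is already fixed, girth at least $5$ makes a rule-(i) vertex the unique common neighbor of two fixed vertices, and rule (ii) forces distinct colors among the remaining siblings. Your case analysis for rule (ii) is just a more explicit, contrapositive rendering of the paper's forward induction over sibling groups.
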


\begin{proof}
By construction, $\vph$ is proper, so we need only prove that $\varphi$ fixes
all vertices.  We use induction on $|V(G)|$, at
each step considering the next vertex, $v$, in $\sigma$ that is not known to be
fixed, along with all of its siblings.  By assumption $\vph$ fixes every vertex
in $\sigma'$.  Consider the first vertex, $v$, that is not yet known to be fixed.
By the induction hypothesis, the parent, $x$, of $v$ is fixed.  So, any
automorphism must map $N(x)$ to $N(x)$.  Suppose that some sibling $y$ of $v$
has a neighbor, $z$, that is already colored.  Since $z$ is already colored, it
comes before $v$ and $y$ in $T$.  So, by the induction hypothesis, $z$ is
fixed.  Since $x$ and $z$ are fixed, and $G$ has no 4-cycle, $y$ is also fixed.
 So all siblings of $v$ with a non-parent neighbor already colored are fixed. 
And by construction each sibling of $v$ without such a neighbor already
colored has a color different from $v$ (or is fixed, since it is in $\sigma'$).
  So $v$ and all its siblings are fixed, which completes the proof.
\end{proof}

Lemma~\ref{mainlem} plays a key role in our proof of Theorem~\ref{mainthm}. 
To illustrate how we use it, we first prove a slightly weaker bound.

\begin{prop}
If $G$ is connected with girth at least 5, then $\chi_D(G)\le \Delta+2$.
\label{prop1}
\end{prop}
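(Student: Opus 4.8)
The plan is to reduce everything to Lemma~\ref{mainlem}: if I can produce a greedy coloring (with respect to some breadth-first order $\sigma$ and some colored prefix $\sigma'$) that uses at most $\Delta+2$ colors and fixes every vertex of $\sigma'$, then that coloring is automatically a proper distinguishing coloring and we are done. So the whole task becomes choosing a prefix $\sigma'$ and an initial coloring $\vph'$ of it that is proper, forces each vertex of $\sigma'$ to be fixed, and leaves enough room for the greedy extension to stay within $\Delta+2$ colors.

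First I would fix any vertex $w$, build a breadth-first spanning tree $T$ rooted at $w$ with order $\sigma$, and take $\sigma'=N[w]$, that is, the root together with level~$1$. Since $G$ has girth at least $5$, there are no edges among the neighbors of $w$, so $G[\sigma']$ is a star and can be colored with a ``rainbow'': give $w$ the single color $\Delta+2$, and give its (at most $\Delta$) neighbors distinct colors drawn from $\{1,\dots,\Delta\}$. This $\vph'$ is proper on $\sigma'$. I then extend $\vph'$ by coloring greedily, as in the Key Definition.

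The color count is the easy part. Each vertex $v$ outside $\sigma'$ lies at level at least $2$, so $w\notin N(v)$; moreover $\card{N(v)}\le\Delta$, so in constraint~(i) at most $\Delta$ colors are forbidden on $v$, and in constraint~(ii) even fewer. Hence the greedy step always finds a free color in $\{1,\dots,\Delta+1\}$ and never reaches $\Delta+2$. Consequently the color $\Delta+2$ is used on $w$ and nowhere else, and $\vph$ uses at most $\Delta+2$ colors in total.

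It remains to check that $\vph$ fixes every vertex of $\sigma'$, which is where the reserved color does its work. Because $\Delta+2$ is a globally unique color, any color-preserving automorphism must send $w$ to the unique vertex of that color, namely $w$ itself, so $w$ is fixed. The neighbors of $w$ are then children of a fixed vertex, so every automorphism permutes them among themselves; but they carry pairwise distinct colors, so each of them is fixed as well. Therefore $\vph$ fixes all of $\sigma'$, and Lemma~\ref{mainlem} shows that $\vph$ is a proper distinguishing coloring using at most $\Delta+2$ colors, giving $\chiD(G)\le\Delta+2$. The one genuinely delicate point is fixing the root: I handle it by spending a dedicated extra color on $w$, and it is precisely this wasted color that separates the easy $\Delta+2$ bound here from the sharp $\Delta+1$ bound of Theorem~\ref{mainthm}.
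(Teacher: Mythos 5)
Your proof is correct and follows essentially the same strategy as the paper: reserve the extra color $\Delta+2$ for the root $w$, color greedily, observe that no other vertex ever needs that color, and invoke Lemma~\ref{mainlem}. The only (cosmetic) difference is that you explicitly pre-color $N(w)$ with distinct colors as part of $\sigma'$, whereas the paper takes $\sigma'=\{w\}$ and lets greedy rule (ii) assign those distinct colors automatically.
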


\begin{proof}
Choose an arbitrary vertex $w$, and let $T$ be a breadth-first spanning tree from
$w$.  Let $\vph'(w)=\Delta+2$, and let $\sigma'=\{w\}$.
Now we color greedily, with respect to $\vph'$ and $\sigma$.
Note that, except on $w$, this coloring only uses colors from
$\{1,\ldots,\Delta+1\}$, since each vertex $v$ must avoid the colors either (i) on
its neighbors, at most $\Delta$, or (ii) on its parent and siblings, at most
$1+(\Delta-1)$.  Since $w$ is the only vertex colored $\Delta+2$, it is fixed.
Now $\vph$ is a proper distinguishing coloring by Lemma~\ref{mainlem}.
Thus, $\chiD(G)\le \Delta+2$.
\end{proof}

\section{Main Result}
Before proving our main result, we need an observation about the largest color 
possibly used in cases (i) and (ii) of a greedy coloring.

\begin{obs}
\label{obs1}
Fix a connected graph $G$ with girth at least 5.
Choose an arbitrary $w\in V(G)$ and breadth-first spanning tree $T$, rooted at
$w$.  For any prefix $\sigma'$ of $\sigma$, proper coloring $\vph'$ of the
subgraph induced by $\sigma'$, and greedy coloring $\vph$, we have the following.
Each vertex in $V(G)\setminus N(w)$ colored by (i) uses a color no larger than
$\Delta+1$ and uses $\Delta+1$ only if all of its neighbors are already
colored, and all use distinct colors.  Each vertex $v\in V(G)\setminus N(w)$
colored by (ii) uses a color no larger than $\Delta$. 
\end{obs}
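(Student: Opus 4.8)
The plan is to handle the two greedy rules separately, in each case bounding the number of forbidden colors and hence the smallest available one.

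First I would treat a vertex $v \in V(G) \setminus N(w)$ colored by rule (i). Here $\vph(v)$ is the smallest color absent from $N(v) \cap \sigma_v$, and since $v$ has at most $\Delta$ neighbors this set has size at most $\Delta$; hence $\vph(v) \le \Delta + 1$ immediately. For the refinement, I would observe that if $\vph(v) = \Delta + 1$ then every color in $\{1, \ldots, \Delta\}$ already occurs on $N(v) \cap \sigma_v$, and as $|N(v)| \le \Delta$ this forces $|N(v) \cap \sigma_v| = \Delta$. Thus every neighbor of $v$ is already colored and the neighbors carry the $\Delta$ distinct colors $1, \ldots, \Delta$. This argument is insensitive to any large colors among the neighbors and does not even use $v \notin N(w)$; the restriction is kept only so that the two halves of the statement read uniformly.

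Next I would treat $v \in V(G) \setminus N(w)$ colored by rule (ii), where the forbidden set is $\sigma_v \cap S_v$ and $S_v$ consists of the parent $x$ together with the siblings of $v$. The crux is to bound $|S_v|$. Because $v \notin N(w)$, the parent $x$ is not the root and so has its own parent in $T$; that grandparent is one of the at most $\Delta$ neighbors of $x$, leaving $x$ with at most $\Delta - 1$ children and hence $v$ with at most $\Delta - 2$ siblings. Therefore $|S_v| \le 1 + (\Delta - 2) = \Delta - 1$, so the smallest color missing from $\sigma_v \cap S_v$ is at most $\Delta$, as claimed. The exclusion of $N(w)$ is exactly what buys the improvement over case (i): were $v$ a child of the root, then $w$ could have $\Delta$ children, the sibling count would rise by one, and the bound would climb back to $\Delta + 1$.

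I expect the only delicate point to be this counting step for rule (ii)—specifically, using $v \notin N(w)$ to guarantee that $x$ has a grandparent and therefore at most $\Delta - 1$ children. Everything else is a direct ``smallest missing color in a small set'' estimate. I would also note in passing that the rule-(ii) assignment is automatically proper: by the case hypothesis, the only already-colored neighbor of $v$ is its parent $x \in S_v$, so forbidding all of $S_v$ already forbids every colored neighbor of $v$.
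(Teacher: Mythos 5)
Your proposal is correct and matches the paper's (one-line) argument: the paper simply declares the rule-(i) statement obvious and justifies rule (ii) by the same sibling count $\le \Delta-2$, which you derive explicitly from $v\notin N(w)$ forcing the parent to have a grandparent. Your write-up just supplies the details the paper leaves implicit.
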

\begin{proof}
The first statement is obvious.  The second holds since $v$ has at most
$\Delta-2$ siblings.  
\end{proof}

\begin{proof}[Proof of Theorem~\ref{mainthm}]
Assume the theorem is false, and $G$ is a counterexample minimizing
$\card{V(G)}$.

\begin{claim}$G$ has $\Delta\ge 3$.
\label{clm0}
\end{claim}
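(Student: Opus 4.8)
The plan is to prove the claim by contradiction against the minimality of $G$: I will assume $\Delta(G)\le 2$ and show that then $\chiD(G)\le\Delta+1$, so that $G$ is \emph{not} a counterexample. Since a connected graph with maximum degree at most $2$ is a path or a cycle, it suffices to establish the bound for these two families. The girth hypothesis forces every cycle to have length at least $5$, and by assumption $G\ne C_6$; so the cycles I must treat are $C_5$ and $C_n$ with $n\ge 7$. In each case the goal is a \emph{proper} coloring, using at most $\Delta+1$ colors, whose only color-preserving automorphism is the identity.

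Paths are easy. The degenerate graphs $K_1$ and $K_2$ have $\chiD=1$ and $\chiD=2$, meeting the bound. For $P_n$ with $n\ge 3$ we have $\Delta+1=3$, and the reversal is the only nontrivial automorphism; it interchanges the two endpoints. If $n$ is even, the alternating $2$-coloring already assigns the endpoints distinct colors, so it is distinguishing. If $n$ is odd, I recolor one endpoint with a third color, so that the reversal now swaps endpoints of different colors and is broken; three colors then suffice.

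The cycles are the heart of the argument. For odd $n\ge 5$ I color one vertex $w$ with color $3$ and alternate $1,2$ around the remaining path; the unique color-$3$ vertex $w$ is fixed, and because the cycle is odd the two neighbors of $w$ receive different colors, so the reflection through $w$ (the only surviving candidate) is destroyed. The main obstacle is the even cycles $C_n$ with $n\ge 8$: here a single defect in an otherwise alternating coloring always leaves a reflective symmetry, because a proper $2$-coloring of the complementary path is forced to alternate and is therefore palindromic. I would resolve this by using the third color \emph{twice}, on two vertices $u,v$ at distance $3$ along the cycle, coloring the two vertices strictly between them $1$ and $2$ and alternating $1,2$ on the remaining arc.

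It then remains to check that the color class $\{u,v\}$ is preserved by no nontrivial symmetry. Identifying the vertices with $\mathbb{Z}_n$ so that $u,v$ correspond to $0,3$, a nontrivial rotation stabilizing $\{0,3\}$ setwise would force $n\mid 6$, which fails for $n\ge 8$ (and it is exactly $n=6$ that would spoil this, matching the exclusion of $C_6$). A vertex-type reflection preserves the parity of indices, hence would have to fix both $u$ (even) and $v$ (odd) individually, impossible for $n\ne 6$. Finally, the unique edge-type reflection stabilizing $\{u,v\}$ swaps them and therefore interchanges the two interior vertices, which carry the distinct colors $1$ and $2$, so it too is broken. Thus the identity is the only color-preserving automorphism, giving $\chiD(C_n)\le 3=\Delta+1$. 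Having covered every connected graph with $\Delta\le 2$, I conclude each satisfies the bound, contradicting that $G$ is a counterexample; hence $\Delta\ge 3$.
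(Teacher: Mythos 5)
Your proposal is correct and takes essentially the same approach as the paper: reduce to paths and cycles (using connectivity and $\Delta\le 2$) and exhibit explicit proper distinguishing colorings with at most $3$ colors, with $C_6$ as the genuine exception. The specific colorings differ in detail (the paper uses the pattern $1,2,3,1,2,3,2,3,\ldots$ on cycles, while you place color $3$ on two vertices at distance $3$ and rule out symmetries via the dihedral group), but the underlying argument is the same, and your verification is sound.
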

We assume that $\card{V(G)}\ge 3$, since otherwise the theorem is trivial.
Since $G$ is connected, this implies that $\Delta\ge 2$.  If $G$ is a path,
then we color one end of the path with 1, and the remaining vertices with 2 and
3 alternating.  If $G$ is a cycle, then
we color its vertices (in cyclic order) as $1,2,3,1,2,\ldots$, where the vertices
after the first five alternate colors 3 and 2.  It is easy to check that these
colorings are proper and distinguishing, unless $G=C_6$.

\begin{claim}$G$ is $\Delta$-regular.
\label{clm1}
\end{claim}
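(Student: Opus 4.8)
The plan is to derive a contradiction by showing that if $G$ is \emph{not} $\Delta$-regular, then $\chiD(G)\le\Delta+1$, contradicting that $G$ is a counterexample. Since $G$ is not regular, I would fix a vertex $w$ with $\deg(w)<\Delta$ and let $T$ be a breadth-first spanning tree rooted at $w$, with vertex order $\sigma$. Take the prefix $\sigma'=\{w\}$ with $\vph'(w)=\Delta+1$, and then color greedily with respect to $\sigma$ and $\vph'$ to obtain $\vph$. The two things to verify are that $\vph$ uses only colors in $\{1,\ldots,\Delta+1\}$ and that $\vph$ fixes $w$; the conclusion then follows immediately from Lemma~\ref{mainlem}.

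For the color bound, the only vertices to worry about are those that might receive color $\Delta+1$. Each level-1 vertex $v$ (that is, $v\in N(w)$) is colored by rule (ii): since $G$ has girth at least $5$, no two neighbors of $w$ are adjacent, so $v$ has no already-colored neighbor other than its parent $w$. As $\deg(w)\le\Delta-1$, the set $S_v$ of siblings and parent of $v$ has at most $\Delta-1$ elements, so $v$ avoids at most $\Delta-1$ colors and thus receives a color at most $\Delta$. Every vertex in $V(G)\setminus N(w)\setminus\{w\}$ receives a color at most $\Delta+1$ by Observation~\ref{obs1}. Hence $\vph$ uses only the colors $1,\ldots,\Delta+1$.

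The main point, and the step I expect to be the crux, is showing that $w$ is fixed even though it need not be the unique vertex of color $\Delta+1$. Here I would use that automorphisms preserve degree. By Observation~\ref{obs1}, any vertex of $V(G)\setminus N(w)$ colored $\Delta+1$ is colored by (i) with all of its neighbors already colored and receiving the $\Delta$ distinct colors $1,\ldots,\Delta$; such a vertex therefore has degree exactly $\Delta$. Combined with the previous paragraph (level-1 vertices never use $\Delta+1$), this shows that $w$ is the only vertex of color $\Delta+1$ whose degree is less than $\Delta$. Since any color-preserving automorphism sends $w$ to a vertex of color $\Delta+1$ and of degree $\deg(w)<\Delta$, it must fix $w$. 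Thus $\vph$ fixes the prefix $\sigma'=\{w\}$, so by Lemma~\ref{mainlem} it is a proper distinguishing coloring using at most $\Delta+1$ colors, contradicting the choice of $G$. Therefore $G$ is $\Delta$-regular.
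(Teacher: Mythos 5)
Your proposal is correct and follows essentially the same route as the paper: root a breadth-first tree at a vertex $w$ of degree less than $\Delta$, precolor $w$ with $\Delta+1$, color greedily, and observe that $w$ is the only vertex of degree less than $\Delta$ receiving color $\Delta+1$, so it is fixed and Lemma~\ref{mainlem} finishes. You simply spell out in more detail the degree-preservation argument that the paper states in one line.
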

Suppose, to the contrary, that $G$ has a vertex, $w$, with degree at most
$\Delta-1$.  Let $T$\aside{$w$, $T$} be a breadth-first spanning tree rooted at
$w$.  Color $w$ with $\Delta+1$, then color $\sigma\setminus \{w\}$ greedily;
call the resulting coloring \Emph{$\vph$}.  Note that no other vertex with
degree at most $\Delta-1$ uses color $\Delta+1$.  Thus, $w$ is fixed by $\vph$.
 Now $\vph$ is a proper distinguishing coloring of $G$ by
Lemma~\ref{mainlem}.  Thus, $\chiD(G)\le \Delta+1$, a contradiction.

\begin{figure}[!t]
\centering
\begin{tikzpicture}[scale = 8.5]
\tikzstyle{VertexStyle} = []
\tikzstyle{EdgeStyle} = [line width=0.8pt]
\tikzstyle{labelingStyle}=[shape = circle, minimum size = 4pt, inner sep = 3pt, outer sep = 2pt]
\tikzstyle{labeledStyle}=[shape = circle, minimum size = 3pt, inner sep = 1pt, outer sep = 1pt, draw]
\tikzstyle{unlabeledStyle}=[shape = circle, minimum size = 2pt, inner sep = 1pt, outer sep = 1pt, draw, fill]
\Vertex[style = labeledStyle, x = 1.000, y = 0.700, L = \footnotesize {$\Delta+1$}]{v0}
\Vertex[style = unlabeledStyle, x = 1.000, y = 0.400, L = \small {}]{v1}
\Vertex[style = labeledStyle, x = 0.700, y = 0.400, L = \footnotesize {$1$}]{v2}
\Vertex[style = labeledStyle, x = 0.400, y = 0.400, L = \footnotesize {$1$}]{v3}
\Vertex[style = unlabeledStyle, x = 1.300, y = 0.400, L = \small {}]{v4}
\Vertex[style = unlabeledStyle, x = 1.600, y = 0.400, L = \small {}]{v5}
\Vertex[style = unlabeledStyle, x = 0.400, y = 0.150, L = \small {}]{v6}
\Vertex[style = unlabeledStyle, x = 0.450, y = 0.150, L = \small {}]{v7}
\Vertex[style = unlabeledStyle, x = 0.500, y = 0.150, L = \small {}]{v8}
\Vertex[style = labeledStyle, x = 0.250, y = 0.150, L = \footnotesize {$\Delta+1$}]{v9}
\Vertex[style = unlabeledStyle, x = 0.950, y = 0.150, L = \small {}]{v10}
\Vertex[style = unlabeledStyle, x = 1.050, y = 0.150, L = \small {}]{v11}
\Vertex[style = unlabeledStyle, x = 1.100, y = 0.150, L = \small {}]{v12}
\Vertex[style = unlabeledStyle, x = 0.900, y = 0.150, L = \small {}]{v13}
\Vertex[style = unlabeledStyle, x = 1.550, y = 0.150, L = \small {}]{v14}
\Vertex[style = unlabeledStyle, x = 1.650, y = 0.150, L = \small {}]{v15}
\Vertex[style = unlabeledStyle, x = 1.700, y = 0.150, L = \small {}]{v16}
\Vertex[style = unlabeledStyle, x = 1.500, y = 0.150, L = \small {}]{v17}
\Vertex[style = unlabeledStyle, x = 1.250, y = 0.150, L = \small {}]{v18}
\Vertex[style = unlabeledStyle, x = 1.350, y = 0.150, L = \small {}]{v19}
\Vertex[style = unlabeledStyle, x = 1.400, y = 0.150, L = \small {}]{v20}
\Vertex[style = unlabeledStyle, x = 1.200, y = 0.150, L = \small {}]{v21}
\Vertex[style = unlabeledStyle, x = 0.650, y = 0.150, L = \small {}]{v22}
\Vertex[style = unlabeledStyle, x = 0.750, y = 0.150, L = \small {}]{v23}
\Vertex[style = unlabeledStyle, x = 0.800, y = 0.150, L = \small {}]{v24}
\Vertex[style = unlabeledStyle, x = 0.600, y = 0.150, L = \small {}]{v25}
\Vertex[style = labelingStyle, x = 1.080, y = 0.700, L = \footnotesize {$w$}]{v26}
\Vertex[style = labelingStyle, x = 0.460, y = 0.400, L = \footnotesize {$x_1$}]{v27}
\Vertex[style = labelingStyle, x = 0.190, y = -0.100, L = \footnotesize {$x_3$}]{v28}
\Vertex[style = unlabeledStyle, x = 0.250, y = -0.100, L = \small {}]{v29}
\Vertex[style = unlabeledStyle, x = 0.150, y = -0.100, L = \small {}]{v30}
\Vertex[style = unlabeledStyle, x = 0.300, y = -0.100, L = \small {}]{v31}
\Vertex[style = unlabeledStyle, x = 0.350, y = -0.100, L = \small {}]{v32}
\Vertex[style = labelingStyle, x = 0.690, y = -0.100, L = \footnotesize {$x$}]{v33}
\Vertex[style = unlabeledStyle, x = 0.650, y = -0.100, L = \small {}]{v34}
\Vertex[style = labelingStyle, x = 0.340, y = 0.150, L = \footnotesize {$x_2$}]{v35}
\Vertex[style = labelingStyle, x = 0.760, y = 0.400, L = \footnotesize {$y_1$}]{v36}
\Edge[](v1)(v0)
\Edge[](v2)(v0)
\Edge[](v3)(v0)
\Edge[](v4)(v0)
\Edge[](v5)(v0)
\Edge[](v6)(v3)
\Edge[](v7)(v3)
\Edge[](v8)(v3)
\Edge[](v9)(v3)
\Edge[](v10)(v1)
\Edge[](v11)(v1)
\Edge[](v12)(v1)
\Edge[](v13)(v1)
\Edge[](v14)(v5)
\Edge[](v15)(v5)
\Edge[](v16)(v5)
\Edge[](v17)(v5)
\Edge[](v18)(v4)
\Edge[](v19)(v4)
\Edge[](v20)(v4)
\Edge[](v21)(v4)
\Edge[](v22)(v2)
\Edge[](v23)(v2)
\Edge[](v24)(v2)
\Edge[](v25)(v2)
\Edge[](v29)(v9)
\Edge[](v30)(v9)
\Edge[](v31)(v9)
\Edge[](v32)(v9)
\Edge[style={bend right}, label = \small {}, labelstyle={auto=right,
fill=none}](v30)(v34)
\end{tikzpicture}
\caption{A partial coloring, constructed in Claim~\ref{clm2}.\label{fig1}}
\end{figure}
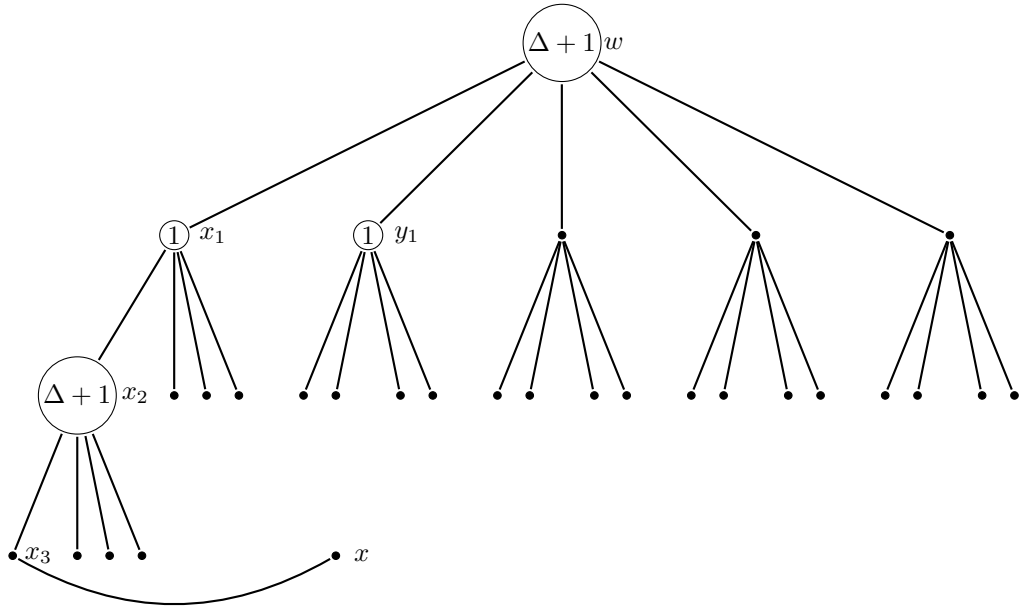

\begin{claim}$G$ has no vertices $w, x, x_3$, with $x$ and $x_3$ adjacent, such
that $\dist(w,x)\ge 3$ and $\dist(w,x_3)=3$.  In particular, $G$ has diameter
at most 3.
\label{clm2}
\end{claim}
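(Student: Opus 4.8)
The plan is to prove the displayed (stronger) statement directly; the diameter bound is then immediate, since if $\dist(w,v)\ge 4$ for some $w,v$, then a shortest $w$--$v$ path has a vertex $x_3$ at distance $3$ from $w$ followed by a vertex $x$ at distance $4$, giving a forbidden triple. So I would suppose, for a contradiction, that $w,x,x_3$ exist with $x\sim x_3$, $\dist(w,x_3)=3$, and $\dist(w,x)\ge 3$. Root a breadth-first spanning tree $T$ at $w$ and fix a shortest path $w\,x_1\,x_2\,x_3$ in $T$ (so $x_1,x_2,x_3$ lie on levels $1,2,3$). Since $\Delta\ge 3$ (Claim~\ref{clm0}) and $G$ is $\Delta$-regular (Claim~\ref{clm1}), the root $w$ has a child $y_1\ne x_1$; I order $\sigma$ so that $x_2$ is the first vertex of level~$2$.

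I would then colour the prefix $\sigma'=\{w\}\cup N(w)\cup\{x_2\}$ by hand: put $\vph(w)=\Delta+1$; give both $x_1$ and $y_1$ the colour $1$ and give the remaining $\Delta-2$ children of $w$ the distinct colours $2,\dots,\Delta-1$ (permissible because $N(w)$ is independent, as $G$ has no triangle); and put $\vph(x_2)=\Delta+1$. Then colour $\sigma\setminus\sigma'$ greedily. By Observation~\ref{obs1}, every greedily coloured vertex outside $N(w)$ uses a colour at most $\Delta+1$, and the hand-chosen colours are at most $\Delta+1$, so $\vph$ uses only $\{1,\dots,\Delta+1\}$. The goal is to show that $\vph$ fixes every vertex of $\sigma'$; then Lemma~\ref{mainlem} makes $\vph$ a proper distinguishing colouring with at most $\Delta+1$ colours, contradicting the choice of $G$.

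The crux is fixing $w$. By Observation~\ref{obs1}, every vertex other than $w$ and $x_2$ that receives colour $\Delta+1$ is coloured by rule~(i) with $\Delta$ distinctly coloured neighbours, so $w$ and $x_2$ are the only $(\Delta+1)$-coloured vertices whose neighbourhood repeats a colour ($w$ repeats $1$ on $x_1,y_1$, and $x_2$ repeats $1$ on $x_1$ together with the child that rule~(ii) forces to colour $1$). Hence any colour-preserving automorphism $f$ preserves $\{w,x_2\}$, and it remains to exclude the transposition $f(w)=x_2$, $f(x_2)=w$. Such an $f$ fixes the unique common neighbour $x_1$ of $w$ and $x_2$ (unique because $G$ has no $4$-cycle); and since $f$ preserves distances and swaps $w$ and $x_2$, we get $\dist(w,f(x_3))=\dist(x_2,x_3)=1$, so $f(x_3)\in N(w)$ and therefore $\vph(x_3)=\vph(f(x_3))$ must be a colour appearing on $N(w)$, that is, at most $\Delta-1$. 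This is the one place the hypothesis enters: I would use the outward edge $x_3x$ with $\dist(w,x)\ge 3$, together with the girth-$5$ condition, to contradict this forced restriction on $\vph(x_3)$. I expect this final step --- ruling out the transposition via the outward edge $x_3x$ --- to be the main difficulty.

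Once the transposition is excluded, both $w$ and $x_2$ are fixed and the remainder is routine: $x_1$ is fixed as the unique common neighbour of $w$ and $x_2$; the colour-$1$ child $y_1$ is then the only colour-$1$ neighbour of $w$ other than the fixed $x_1$, hence fixed; and each remaining child of $w$ is the unique child of its colour. Thus $\vph$ fixes all of $\sigma'$, and Lemma~\ref{mainlem} completes the argument.
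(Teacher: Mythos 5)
Your setup coincides with the paper's (special colors on $w$, $x_1$, $y_1$, $x_2$, then a greedy extension and an appeal to Lemma~\ref{mainlem}), but the step you defer --- ruling out an automorphism that swaps $w$ and $x_2$ --- is exactly the content of the claim, and the mechanism you sketch for it does not work. You hope to show that $\vph(x_3)$ is forced to be large, so that under the putative swap it could not land in the color multiset $\{1,1,2,\dots,\Delta-1\}$ carried by $N(w)$. But nothing in your construction controls $\vph(x_3)$: you color $x_3$ greedily, and both greedy rules always pick the \emph{smallest} available color, so $\vph(x_3)$ will typically be $2$ or $3$, comfortably inside that multiset. The outward edge $x_3x$ with $\dist(w,x)\ge 3$ gives no lower bound on $\vph(x_3)$; all it tells you is that $x$ is still uncolored when $x_3$ is reached. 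So the transposition is not excluded, and the proof is incomplete at its decisive point.

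The paper closes this gap by modifying the coloring rather than analyzing the automorphism: it makes $x_3$ the \emph{last} child of $x_2$ to be colored, and since its neighbor $x$ is still uncolored at that moment (this is precisely where $\dist(w,x)\ge 3$ enters), $x_3$ has at least two available colors; one of them is chosen so that the multiset of colors on $N(x_2)$ differs from that on $N(w)$. That single adjustment makes $w$ the unique vertex colored $\Delta+1$ whose neighborhood carries a repeated color together with that particular multiset, so $w$ is fixed outright and no swap needs to be excluded. (The paper then separates $x_1$ from $y_1$ by observing that $x_1$ has a non-parent neighbor colored $\Delta+1$, namely $x_2$, while no child of $y_1$ can receive $\Delta+1$; your alternative --- fixing $x_1$ as the unique common neighbor of $w$ and $x_2$, then $y_1$ as the remaining neighbor of $w$ colored $1$ --- would also work once both $w$ and $x_2$ are fixed.) If you import the paper's choice of $\vph(x_3)$, the rest of your outline goes through.
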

If $G$ has a pair of vertices $w$ and $x$ with $\dist(w,x)=4$, then let $x_3$ be
a neighbor of $x$ on a shortest path from $x$ to $w$.  Thus, the second
statement follows from the first.  Now we prove the first.
Assume the contrary.
Let $wx_1x_2x_3x$\aside{$w$, $x_i$, $x$} be a path with $\dist(w,x_i)=i$ for
each $i\in\{1,2,3\}$ and $\dist(w,x)\ge 3$; see Figure~\ref{fig1}.  Let
\Emph{$T$} be a breadth-first spanning tree rooted at $w$, such that each $x_i$
is the first vertex at level $i$ of $T$.  Let \Emph{$y_1$} be the second child
of $w$.  Color $T$ greedily except for the following
modifications.  Color $w$ and $x_2$ with $\Delta+1$, color $x_1$ and $y_1$
with 1, and color $x_3$ after all of its siblings so that the
multiset of colors appearing on $N(x_2)$ differs from that appearing on $N(w)$.
This final step is possible because $x$ is uncolored at the time we color $x_3$,
so we have at least two options for $x_3$.  Call the resulting coloring
\Emph{$\vph$}.

Clearly, $\vph$ is proper and uses at most $\Delta+1$ colors.
We must verify that $\vph$ fixes all vertices.  The proof uses
Lemma~\ref{mainlem}, so we must show that $\vph$ fixes $\sigma'$.  As in
Observation~\ref{obs1}, no vertex other than $w$ and (possibly) $x_2$ uses color
$\Delta+1$ and also has a repeated color in its neighborhood.  By construction,
the multisets of colors used on $N(x_2)$ and $N(w)$ differ.  Thus, $w$ is fixed.
The only children of $w$ that use a common color are $x_1$ and $y_1$; so all of
its other children are fixed.  Consider a child of $y_1$, call it $y_2$.  If
$y_2$ is colored by (ii), then $\vph(y_2)\le \Delta$, by
Observation~\ref{obs1}.  If $y_2$ is colored by (i), then $y_2$ has at most one
colored neighbor (other than $y_1$), since $G$ has girth at least 5.  Thus,
$\vph(y_2)\le 3\le \Delta$.  So $x_1$ has a neighbor (other than $w$) colored
$\Delta+1$, but $y_1$ has no such neighbor.  Hence, $x_1$ and $y_1$ are fixed. 
Let $\sigma'$ be the prefix of $\sigma$ ending with $x_2$ and $\vph'$ the
restriction of $\vph$ to the subgraph induced by $\sigma'$.  Now
Lemma~\ref{mainlem} implies that all vertices are fixed, so $\vph$ is a proper
distinguishing coloring using at most $\Delta+1$ colors, a contradiction.

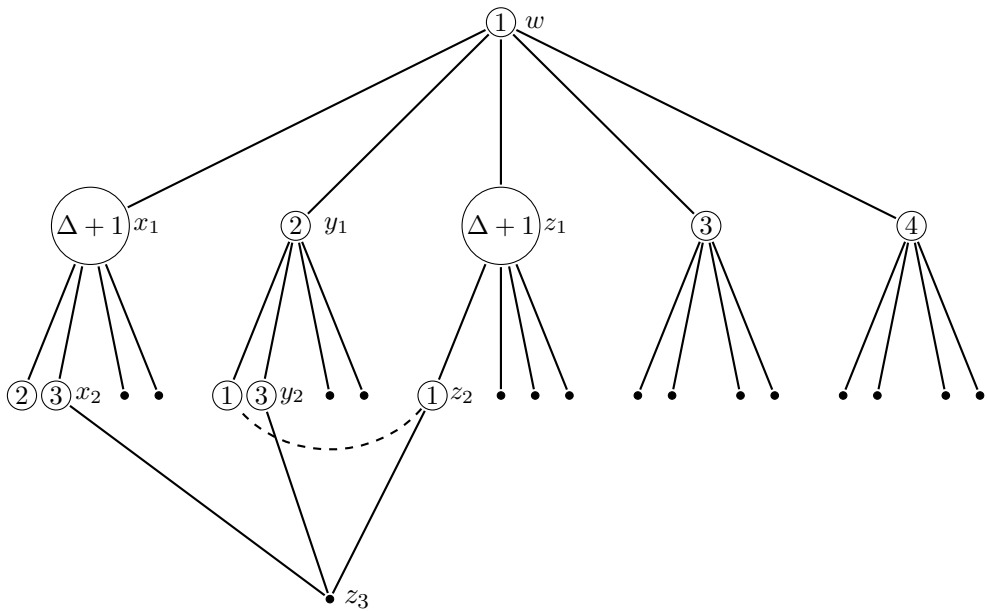
\begin{figure}[!b]
\centering
\begin{tikzpicture}[scale = 9]
\tikzstyle{VertexStyle} = []
\tikzstyle{EdgeStyle} = [line width=.8pt]
\tikzstyle{labelingStyle}=[shape = circle, minimum size = 4pt, inner sep = 3pt, outer sep = 2pt]
\tikzstyle{labeledStyle}=[shape = circle, minimum size = 3pt, inner sep = 1pt, outer sep = 1pt, draw]
\tikzstyle{unlabeledStyle}=[shape = circle, minimum size = 2pt, inner sep = 1pt, outer sep = 1pt, draw, fill]
\Vertex[style = labeledStyle, x = 0.850, y = 0.900, L = \footnotesize {$1$}]{v0}
\Vertex[style = labeledStyle, x = 0.850, y = 0.600, L = \footnotesize {$\Delta+1$}]{v1}
\Vertex[style = labeledStyle, x = 0.550, y = 0.600, L = \footnotesize {$2$}]{v2}
\Vertex[style = labeledStyle, x = 0.250, y = 0.600, L = \footnotesize {$\Delta+1$}]{v3}
\Vertex[style = labeledStyle, x = 1.150, y = 0.600, L = \footnotesize {$3$}]{v4}
\Vertex[style = labeledStyle, x = 1.450, y = 0.600, L = \footnotesize {$4$}]{v5}
\Vertex[style = labeledStyle, x = 0.200, y = 0.350, L = \footnotesize {$3$}]{v6}
\Vertex[style = unlabeledStyle, x = 0.300, y = 0.350, L = \small {}]{v7}
\Vertex[style = unlabeledStyle, x = 0.350, y = 0.350, L = \small {}]{v8}
\Vertex[style = labeledStyle, x = 0.150, y = 0.350, L = \footnotesize {$2$}]{v9}
\Vertex[style = unlabeledStyle, x = 0.850, y = 0.350, L = \small {}]{v10}
\Vertex[style = unlabeledStyle, x = 0.900, y = 0.350, L = \small {}]{v11}
\Vertex[style = unlabeledStyle, x = 0.950, y = 0.350, L = \small {}]{v12}
\Vertex[style = labeledStyle, x = 0.750, y = 0.350, L = \footnotesize {$1$}]{v13}
\Vertex[style = unlabeledStyle, x = 1.400, y = 0.350, L = \small {}]{v14}
\Vertex[style = unlabeledStyle, x = 1.500, y = 0.350, L = \small {}]{v15}
\Vertex[style = unlabeledStyle, x = 1.550, y = 0.350, L = \small {}]{v16}
\Vertex[style = unlabeledStyle, x = 1.350, y = 0.350, L = \small {}]{v17}
\Vertex[style = unlabeledStyle, x = 1.100, y = 0.350, L = \small {}]{v18}
\Vertex[style = unlabeledStyle, x = 1.200, y = 0.350, L = \small {}]{v19}
\Vertex[style = unlabeledStyle, x = 1.250, y = 0.350, L = \small {}]{v20}
\Vertex[style = unlabeledStyle, x = 1.050, y = 0.350, L = \small {}]{v21}
\Vertex[style = labeledStyle, x = 0.500, y = 0.350, L = \footnotesize {$3$}]{v22}
\Vertex[style = unlabeledStyle, x = 0.600, y = 0.350, L = \small {}]{v23}
\Vertex[style = unlabeledStyle, x = 0.650, y = 0.350, L = \small {}]{v24}
\Vertex[style = labeledStyle, x = 0.450, y = 0.350, L = \footnotesize {$1$}]{v25}
\Vertex[style = labelingStyle, x = 0.900, y = 0.900, L = \footnotesize {$w$}]{v26}
\Vertex[style = labelingStyle, x = 0.333, y = 0.600, L = \footnotesize {$x_1$}]{v27}
\Vertex[style = labelingStyle, x = 0.610, y = 0.600, L = \footnotesize {$y_1$}]{v28}
\Vertex[style = labelingStyle, x = 0.930, y = 0.600, L = \footnotesize {$z_1$}]{v29}
\Vertex[style = labelingStyle, x = 0.248, y = 0.350, L = \footnotesize {$x_2$}]{v30}
\Vertex[style = labelingStyle, x = 0.545, y = 0.350, L = \footnotesize {$y_2$}]{v31}
\Vertex[style = labelingStyle, x = 0.793, y = 0.350, L = \footnotesize {$z_2$}]{v32}
\Vertex[style = unlabeledStyle, x = 0.600, y = 0.050, L = \small {}]{v33}
\Vertex[style = labelingStyle, x = 0.640, y = 0.050, L = \small {$z_3$}]{v34}
\Edge[](v1)(v0)
\Edge[](v2)(v0)
\Edge[](v3)(v0)
\Edge[](v4)(v0)
\Edge[](v5)(v0)
\Edge[](v6)(v3)
\Edge[](v7)(v3)
\Edge[](v8)(v3)
\Edge[](v9)(v3)
\Edge[](v22)(v2)
\Edge[](v23)(v2)
\Edge[](v24)(v2)
\Edge[](v25)(v2)
\Edge[](v10)(v1)
\Edge[](v11)(v1)
\Edge[](v12)(v1)
\Edge[](v13)(v1)
\Edge[](v18)(v4)
\Edge[](v19)(v4)
\Edge[](v20)(v4)
\Edge[](v21)(v4)
\Edge[](v14)(v5)
\Edge[](v15)(v5)
\Edge[](v16)(v5)
\Edge[](v17)(v5)
\Edge[](v33)(v6)
\Edge[](v33)(v22)
\Edge[](v33)(v13)
\Edge[style = {bend left=50, dashed}](v13)(v25)
\end{tikzpicture}
\caption{A partial coloring, constructed in Claim~\ref{clm3}.\label{fig2}}
\end{figure}

\begin{claim}$G$ has diameter 2 or $\Delta=3$.
\label{clm3}
\end{claim}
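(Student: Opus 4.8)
The plan is to argue by contradiction. By Claim~\ref{clm2} the diameter is at most $3$, and a diameter-$1$ graph with $\Delta\ge 3$ would contain a triangle; so if the claim fails, then $G$ has diameter exactly $3$ and $\Delta\ge 4$, and it suffices to produce a proper distinguishing coloring with at most $\Delta+1$ colors. First I would pick $w$ of eccentricity $3$ and take a breadth-first tree $T$ rooted at $w$, so that some vertex $z_3$ lies at level $3$. The structural observation I would establish at the outset is this: Claim~\ref{clm2} forces every neighbor of $z_3$ to lie at level $2$, and girth at least $5$ forbids two of these neighbors from sharing a parent (that would create a $4$-cycle). Since $z_3$ has $\Delta$ neighbors and $w$ has exactly $\Delta$ children (as $G$ is $\Delta$-regular by Claim~\ref{clm1}), $z_3$ is adjacent to exactly one grandchild of $w$ in each of the $\Delta$ level-$1$ subtrees. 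I would then single out three of these subtrees, rooted at children $x_1,y_1,z_1$ of $w$, with corresponding level-$2$ neighbors $x_2,y_2,z_2$ of $z_3$.

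Next I would build the coloring $\vph$. Set $\vph(w)=1$, color $x_1$ and $z_1$ with $\Delta+1$, color $y_1$ with $2$, and give the remaining children of $w$ pairwise distinct colors from $\{3,\dots,\Delta\}$ (here $\Delta\ge 4$ provides enough room). To control where color $\Delta+1$ can reappear, I would plant two repeated colors: color $z_2$ with $1$, so that color $1$ appears twice on $N(z_1)$ (on $w$ and on $z_2$); and color both $x_2$ and $y_2$ with $3$, so that $N(z_3)$ also has a repeated color. I would additionally color the remaining children of $x_1$ with distinct colors so that $N(x_1)$ receives all distinct colors. Finally, I would color the rest of $T$ greedily. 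By construction $\vph$ is proper, and by Observation~\ref{obs1} it uses only colors in $\{1,\dots,\Delta+1\}$.

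To finish I would show $\vph$ fixes the non-greedy prefix and then invoke Lemma~\ref{mainlem}. Consider the set $S$ of vertices colored $\Delta+1$ that also have a repeated color in their neighborhood. By Observation~\ref{obs1}, any greedily colored vertex with color $\Delta+1$ has an all-distinct neighborhood, so $S$ can contain only specially colored vertices; among those, $z_1$ has a repeat (color $1$ appears twice on $N(z_1)$) while $N(x_1)$ has no repeat, so $S=\{z_1\}$. Hence every color-preserving automorphism fixes $z_1$ and permutes $N(z_1)$. I would then identify $w$ inside $N(z_1)$ as the unique color-$1$ vertex having a \emph{second} neighbor colored $\Delta+1$, namely $x_1$; since $x_1$ and $z_1$ have $w$ as their only common neighbor (girth at least $5$), this pins down $w$. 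With $w$ fixed, the children of $w$ are fixed (the two $\Delta+1$-colored children $x_1,z_1$ cannot be swapped because only $z_1$ lies in $S$, and the others are fixed by their distinct colors), after which $x_2,y_2,z_2$ and their common neighbor $z_3$ are fixed too. Lemma~\ref{mainlem} then yields a proper distinguishing coloring with at most $\Delta+1$ colors, the desired contradiction.

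The main obstacle is precisely the step that identifies $w$ as the unique color-$1$ vertex of $N(z_1)$ with a second $\Delta+1$-neighbor: the greedy phase may legitimately place color $\Delta+1$ on level-$2$ or level-$3$ vertices, and I must rule out a ``fake $w$'', i.e.\ another color-$1$ child of $z_1$ adjacent to a second vertex colored $\Delta+1$. Controlling this is where the ingredients combine: Observation~\ref{obs1} shows that any stray $\Delta+1$ arises from an all-distinct (hence tightly constrained) neighborhood, the one-grandchild-per-subtree structure of level-$3$ vertices is what lets the repeats planted on $N(z_3)$ keep level-$3$ vertices off color $\Delta+1$, and girth at least $5$ supplies the uniqueness of common neighbors used throughout. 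I expect verifying that no fake $w$ survives --- and, relatedly, that the entire non-greedy prefix is fixed so that Lemma~\ref{mainlem} applies --- to be the delicate heart of the argument.
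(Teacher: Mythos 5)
Your setup coincides with the paper's: the same reduction to diameter $3$ with $\Delta\ge 4$, the same cast of special vertices $w,x_1,y_1,z_1,x_2,y_2,z_2,z_3$, and essentially the same planted colors ($\vph(w)=1$, $\vph(x_1)=\vph(z_1)=\Delta+1$, $\vph(z_2)=1$, and $x_2,y_2$ both colored $3$). Fixing $z_1$ first, as the unique $(\Delta+1)$-colored vertex with a repeated color in its neighborhood, is also the paper's opening move. But the gap is exactly where you say it is, and it is genuine: your criterion for recovering $w$ inside $N(z_1)$ --- ``the unique color-$1$ neighbor of $z_1$ with a second neighbor colored $\Delta+1$'' --- can fail. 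A level-$3$ vertex $u$ adjacent to $z_2$ has all $\Delta$ of its neighbors at level $2$, one per subtree (by your own structural observation), and if those neighbors happen to carry the $\Delta$ distinct colors $1,\dots,\Delta$, then greedy rule (i) legitimately assigns $u$ the color $\Delta+1$. In that case $z_2$ is a color-$1$ neighbor of $z_1$ with two $(\Delta+1)$-colored neighbors, i.e.\ a surviving fake $w$, and nothing in your construction prevents this. (A second, smaller omission: you never force the children of $z_1$ other than $z_2$ to avoid color $1$, so $N(z_1)$ may contain further color-$1$ candidates; the paper shows each such child has at most two colored non-parent neighbors and hence, since $\Delta\ge 4$, a free color in $\{2,3,4\}$ with which to dodge color $1$.)

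The paper's resolution is different and is the idea you are missing: it does not distinguish $w$ from $z_2$ by counting $(\Delta+1)$-colored neighbors at all. Instead it makes $z_3$ the \emph{last} child of $z_2$ in the BFS order and, when its turn comes, chooses $\vph(z_3)$ so that the full multisets of colors on $N(w)$ and $N(z_2)$ differ; this choice is available precisely because $x_2$ and $y_2$ share color $3$, which leaves $z_3$ at least two legal colors. Once those multisets differ, $w$ and $z_2$ (the only color-$1$ vertices in $N(z_1)$) cannot be exchanged by a color-preserving automorphism, and the rest follows by the inductive argument of Lemma~\ref{mainlem}. In other words, the repeat planted on $N(z_3)$ is not there to keep level-$3$ vertices off color $\Delta+1$ (it cannot do that for level-$3$ vertices other than $z_3$); it is there to buy the freedom to pick $z_3$'s color. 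Without this step, or some substitute for it, your argument does not close.
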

Suppose, to the contrary, that $\Delta\ge 4$ and $G$ has diameter at least 3. 
Claim~\ref{clm2} implies that $G$ has diameter at most 3, so assume $G$ has
diameter 3.  Choose $w,z_1,z_2,z_3\in V(G)$\aside{$w$, $z_i$, $T$} such
that $wz_1z_2z_3$ is a path and $\dist(w,z_i)=i$ for each $i\in \{1,2,3\}$.  
Let \emph{$T$} be a breadth-first spanning tree rooted at $w$.  Each $z_i$ is
at level $i$ in $T$.  Further, by Claim~\ref{clm2}, every neighbor of $z_3$ is
at level 2 in $T$.
Choose $x_1,x_2,y_1,y_2\in V(G)$\aside{$x_i$, $y_i$} such that $wx_1x_2z_3$ and
$wy_1y_2z_3$ are
paths, and $x_1,x_2,y_1,y_2,z_1,z_2$ are distinct; see Figure~\ref{fig2}.
After possibly reordering some siblings, we can assume that $x_1$, $y_1$, and
$z_1$ are (respectively) the first, second, and third children of $w$.  We can
also assume that $z_2$ is the first child of $z_1$ and that $z_3$ is the last
child of $z_2$ (even though $z_3$ would naturally be a child of $x_2$). 
Further, we assume $x_2$
and $y_2$ are the second children, respectively, of $x_1$ and $y_1$.  Finally,
by reordering children of $y_1$ if needed, we assume that $z_2$ is not
adjacent to the first child of $y_1$.  Since $\Delta\ge 4$ (and $G$ has girth at
least 5), $y_1$ has at least 3 children; one of these is $y_2$ and at most one
is adjacent to $z_2$, since $G$ has no 4-cycle.  Similarly, we assume that $y_2$
is not adjacent to the first child of $x_1$.

Begin by coloring $w$ with $1$ and coloring $x_1$ and $z_1$ with $\Delta+1$.
We greedily color the rest of $T$ except for the following modifications;
none of these change the order of the vertices, only our choice of color for a
vertex when we reach it in the order.
No child of $x_1$ uses color 1 (so its children use colors $2,\ldots,\Delta$).
Vertex $z_2$ uses 1 and no other child of $z_1$ uses 1.  (If such a child,
say \Emph{$z_2'$}, has no colored neighbor other than $z_1$, then this happens
naturally as a result of a greedy coloring, since $z_2$ uses 1.  If $z_2'$ does
have a colored neighbor other than $z_1$, then it has at most two: one child of
$x_1$ and one child of $y_1$, since $G$ has no 4-cycles.  Since $\Delta\ge 4$,
$z_2'$ has some available color from among $\{2,3,4\}$ that is not already used
on any of its neighbors, so $z_2'$ can avoid 1;
recall that $z_1$ uses $\Delta+1>4$, so $z_1$ does not forbid a color in
$\{2,3,4\}$.) Note that $x_2$ and $y_2$ both use 3.  When we color $z_3$ (as the
last child of $z_2$), we do so to ensure that
the multisets of colors used on $N(w)$ and $N(z_2)$ differ; this is possible
because $x_2$ and $y_2$ use the same color, so $z_3$ has at least two available
colors. Further, if possible, we also require that $z_3$ not use color
$\Delta+1$; so $z_3$ uses $\Delta+1$ only if all of its neighbors use distinct
colors, except for $x_2$ and $y_2$, which both use 3.  Call the resulting
coloring \Emph{$\vph$}.  

Now we show that $\vph$ fixes all vertices.
Since $\vph(z_2)=1$, vertex $z_1$ is the unique vertex colored $\Delta+1$
with at least two neighbors colored 1, by Observation~\ref{obs1}.  Thus $z_1$
is fixed.  By design, no
child of $z_1$ other than $z_2$ uses color 1.  By our choice of color for $z_3$,
the multisets of colors on $N(w)$ and $N(z_2)$ differ.  Thus, $w$ is fixed and
$z_2$ is fixed.  Since we already know $z_1$ is fixed, each neighbor of $w$ is
fixed.  Now, as in the proof of Lemma~\ref{mainlem}, by induction, we show for
each vertex in $N(w)$ that its children are fixed.  Finally, consider a vertex,
$u$, at level 3 of $T$.  By Claim~\ref{clm2}, each neighbor of $u$ is at level
2.  Thus, since $G$ has no 4-cycle and $u$ has at least 2 fixed neighbors, $u$
is fixed.  So $\chiD(G)\le \Delta+1$, a contradiction.

\begin{claim} $G$ has $\Delta=3$.
\label{clm4}
\end{claim}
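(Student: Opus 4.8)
The plan is to assume $\Delta\ge 4$ and derive a contradiction, which forces $\Delta=3$. By Claim~\ref{clm3}, since $\Delta\ge 4$, the graph $G$ has diameter $2$. Together with $\Delta$-regularity and girth at least $5$, this determines the local structure of $G$: building a breadth-first tree $T$ from any root $w$, level~$1$ is exactly $N(w)=\{u_1,\ldots,u_\Delta\}$, and each level-$2$ vertex has a unique level-$1$ neighbor (two would create a $4$-cycle through $w$), so the level-$2$ vertices split into $\Delta$ ``groups'' of $\Delta-1$ children each, and $|V(G)|=\Delta^2+1$ (a Moore graph). The extra fact I would extract from diameter~$2$ is that every level-$2$ vertex is adjacent to exactly one vertex in each other group: it cannot reach $u_k$ (for $k$ not the index of its parent) in one step, so it reaches $u_k$ in two steps, and no $4$-cycle forces the common neighbor to be unique. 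Thus between any two groups there is a perfect matching. The argument I give will be uniform in $\Delta$, so the classification of Moore graphs is not needed.

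I would then exhibit a proper $(\Delta+1)$-coloring $\vph$ fixing $\{w\}\cup N(w)$, so that Lemma~\ref{mainlem} finishes the proof. Color $w$ with $\Delta+1$; on level~$1$ set $\vph(u_1)=\vph(u_2)=1$ and give $u_3,\ldots,u_\Delta$ distinct colors from $\{2,\ldots,\Delta\}$; then color the rest of $T$ greedily, with the children of $u_1$ appearing before those of $u_2$ in $\sigma$. To see that $w$ is fixed, note that $w$ uses $\Delta+1$ and has a repeated color (namely $1$) in its neighborhood; meanwhile no level-$1$ vertex uses $\Delta+1$, and by Observation~\ref{obs1} any vertex of $V(G)\setminus N(w)$ colored $\Delta+1$ is colored by rule (i) with all neighbors distinctly colored. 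So $w$ is the unique $(\Delta+1)$-vertex with a repeat in its neighborhood, hence fixed. Once $w$ is fixed, every color-preserving automorphism permutes $N(w)$; thus $u_3,\ldots,u_\Delta$ (distinctly colored within level~$1$) are fixed, and the set $\{u_1,u_2\}$ is preserved.

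The main obstacle is the residual symmetry between $u_1$ and $u_2$, which share the color $1$, and I expect the crux to be that the greedy rule breaks this symmetry for free. The children of $u_1$ are colored first, see only their parent (color $1$), and so by rule (ii) receive the distinct colors $2,\ldots,\Delta$, making $N(u_1)$ rainbow. Each child of $u_2$, by contrast, is colored by rule (i) against exactly two colored neighbors: $u_2$ (color $1$) and its matched partner in group~$1$. As the partner colors run bijectively over $\{2,\ldots,\Delta\}$, every such child except the one matched to color $2$ receives color $2$, leaving $\Delta-2\ge 2$ children of $u_2$ sharing color $2$. Hence $N(u_2)$ has a repeated color while $N(u_1)$ does not, so no automorphism can swap $u_1$ and $u_2$, and both are fixed; this is exactly where $\Delta\ge 4$ is used, since for $\Delta=3$ the count $\Delta-2$ drops to $1$ and the repeat vanishes. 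With all of $\{w\}\cup N(w)$ fixed, Lemma~\ref{mainlem} shows $\vph$ is distinguishing, and since $\vph$ uses at most $\Delta+1$ colors this contradicts the choice of $G$. Therefore $\Delta=3$.
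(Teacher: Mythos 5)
Your proof is correct, but it takes a genuinely different route from the paper's. The paper handles Claim~\ref{clm4} by exploiting the minimal-counterexample setup: it deletes $\{w\}\cup N(w)$, observes that the remaining graph $G'$ is connected, $(\Delta-1)$-regular with girth at least $5$ (connectivity again coming from the diameter-$2$/Moore structure), invokes minimality to get a distinguishing $\Delta$-coloring of $G'$ (this is where $\Delta\ge 4$ enters, so that $\Delta(G')\ge 3$ and $G'\ne C_6$), and then colors all of $N(w)$ with the spare color $\Delta+1$ and $w$ arbitrarily; the girth condition then fixes everything. You instead give a direct, non-recursive construction: you extract the finer Moore-graph structure (perfect matchings between the $\Delta$ groups at level $2$) and use it to show that the greedy rules automatically produce a rainbow neighborhood for $u_1$ but a repeated color~$2$ in $N(u_2)$, breaking the one residual symmetry; Lemma~\ref{mainlem} and Observation~\ref{obs1} then finish exactly as in the paper's other claims. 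Your use of $\Delta\ge 4$ (so that $\Delta-2\ge 2$ children of $u_2$ share color $2$) is a correct and genuinely different pressure point from the paper's (which needs $\Delta-1\ge 3$ for the induction). What the paper's argument buys is brevity and no need for the matching structure; what yours buys is a self-contained, explicitly constructive coloring that does not lean on the induction hypothesis. Both are valid proofs of the claim.
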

Suppose that $\Delta \ge 4$.  By Claim~\ref{clm3}, $G$ has diameter 2.
Since $G$ is $\Delta$-regular with diameter 2 and girth 5, we know that
$\card{V(G)}=\Delta^2+1$.  Choose an arbitrary vertex $w$\aside{$w$, $G'$}, and
let $G'=G\setminus\{w\cup N(w)\}$.  Note that $G'$ is regular, with
degree $\Delta(G)-1$.  
Since $G$ has diameter 2, for each $v_1,v_2\in V(G')$, if $v_1$ and $v_2$ have
no common neighbor (in $G$) among $N(w)$, then $\dist_{G'}(v_1,v_2)\le
2$.  This implies that $G'$ is connected.  
Since $\Delta(G)-1\ge 3$, by the minimality of $G$ we know that $\chiD(G')\le
\Delta(G')+1=\Delta(G)$; let \Emph{$\vph'$} be a coloring of $G'$
showing this.  To extend $\vph'$ to $G$, use color $\Delta(G)+1$ on each
vertex in $N(w)$ and use an arbitrary color (other than $\Delta(G)+1$) on $w$;
we call this coloring \Emph{$\vph$}.  Clearly $w$ is fixed by $\vph$, since
each other vertex has at most one neighbor colored $\Delta(G)+1$.  Further,
each vertex of $G'$ is fixed by $\vph'$, so we get that each vertex of $G'$ is
fixed by $\vph$.  Finally, each vertex $v\in N(w)$ has all of its neighbors
fixed by $\vph$.  Since $G$ has no 4-cycles, $v$ is also fixed by $\vph$. 
Thus, $\chiD(G)\le \Delta(G)+1$, a contradiction.

\begin{claim} $G$ is either bipartite or vertex-transitive.
\label{clm5}
\end{claim}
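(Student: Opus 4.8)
The plan is to prove the contrapositive: assuming $G$ is neither bipartite nor vertex-transitive, I will produce a proper distinguishing coloring of $G$ with at most $\Delta+1=4$ colors, contradicting that $G$ is a counterexample. The engine throughout is Lemma~\ref{mainlem}: it suffices to root a breadth-first tree $T$ at some vertex $w$, pick a short prefix $\sigma'$, and color $\sigma'$ with at most $4$ colors so that $\vph$ already fixes every vertex of $\sigma'$; the greedy extension then fixes all of $G$, and by Observation~\ref{obs1} (together with the count for children of $w$) it uses only colors in $\{1,\dots,4\}$. As a warm-up that already constrains the situation, note that if some vertex $w$ lies in a singleton $\aut(G)$-orbit, then rooting $T$ at $w$ and taking $\sigma'=\{w\}$ satisfies the hypothesis of Lemma~\ref{mainlem} for free, since every automorphism (in particular every color-preserving one) fixes $w$; this gives $\chiD(G)\le 4$. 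Hence in our minimal counterexample every orbit has size at least $2$, and since $G$ is assumed not vertex-transitive, $\aut(G)$ has at least two orbits.

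Next I would use non-transitivity to select a good root. Because $G$ is connected with at least two orbits, some edge $uv$ joins vertices in distinct orbits $O_u\ne O_v$, so no automorphism maps $u$ to $v$; I root $T$ at $w:=u$. Since $G$ is cubic with girth at least $5$, level $1$ is the three neighbors of $w$ and level $2$ is six distinct vertices, while by Claim~\ref{clm2} everything else lies at level $3$. The goal is to color $w$ and the top two levels — using the ``color $\Delta+1$ with a repeated neighbor color'' device of Observation~\ref{obs1}, exactly as in Claims~\ref{clm2} and~\ref{clm3} — so that $w$ becomes the unique vertex realizing its local color pattern and is therefore fixed by every color-preserving automorphism. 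Having fixed $w$, I give its three children distinct colors to fix them, take $\sigma'$ to be the prefix through level $2$, and finish by Lemma~\ref{mainlem}.

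The main obstacle is ruling out the two symmetries that can survive: a color-preserving automorphism that (i) moves $w$ within its orbit $O_w$, or (ii) fixes $w$ but permutes its three branches in $T$. Non-transitivity together with the choice of $w$ confines (i) to $O_w$, and the unique-pattern coloring is designed to defeat it; the delicate case is (ii), a ``reflection'' swapping two isomorphic colored branches. This is exactly where the non-bipartite hypothesis should do the work: an odd cycle exists (since $G$ is non-bipartite) and has length at least $5$ by the girth bound, and it must pass between breadth-first levels in a way no branch-swap can respect, so any configuration that cannot be broken would force the even, side-swapping structure of a bipartite graph. I expect the cleanest writeup to argue directly that if neither (i) nor (ii) can be destroyed by any $4$-coloring of the prefix, then $G$ is vertex-transitive or bipartite, contradicting our assumption, and that otherwise the construction above feeds into Lemma~\ref{mainlem} to give $\chiD(G)\le 4$. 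Verifying that the branch-swapping case (ii) always either breaks or collapses into bipartiteness is the step I anticipate being the hardest and most case-heavy.
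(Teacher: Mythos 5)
Your proposal has a genuine gap: the case you yourself flag as ``the hardest and most case-heavy'' --- defeating a color-preserving automorphism that fixes $w$ but swaps two branches of $T$ --- is exactly the content of the claim, and you never resolve it. Your heuristic that an odd cycle ``must pass between breadth-first levels in a way no branch-swap can respect'' is not true in any usable form: in the Petersen graph (non-bipartite, girth $5$) the stabilizer of a vertex induces the full symmetric group on its three neighbors, so odd cycles by themselves do nothing to obstruct branch swaps (Petersen is of course vertex-transitive, so the claim survives, but your proposed mechanism does not explain why). Moreover, your choice of root --- an endpoint of an edge joining two distinct orbits --- controls the relationship between $w$ and one neighbor, but says nothing about whether two \emph{neighbors} of $w$ can be exchanged by an automorphism, which is what actually must be excluded once you give two children of $w$ the same color.

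The paper sidesteps all of this with a different case split. Either (a) there exist $w$ and $x_1,y_1\in N(w)$ such that no automorphism maps $x_1$ to $y_1$: then color $w$ with $\Delta+1$, color $x_1$ and $y_1$ both with $1$, and extend greedily; $w$ is fixed because it is the unique vertex colored $\Delta+1$ with a repeated color in its neighborhood (Observation~\ref{obs1}), and $x_1,y_1$ are fixed \emph{by hypothesis}, not by any coloring trick --- the troublesome branch swap is ruled out before any colors are chosen. Or (b) every two neighbors of a common vertex lie in the same orbit: then no coloring is needed at all; by induction on walk length, any two vertices joined by an even walk lie in one orbit, so either $G$ is bipartite, or an odd cycle lets one adjust the parity of any walk and $G$ is vertex-transitive. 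To repair your argument, restructure it around whether two neighbors of a \emph{common} vertex can lie in distinct orbits (rather than around an edge between distinct orbits), and obtain the bipartite-or-transitive dichotomy from the even-walk argument instead of trying to break branch symmetries with colors.
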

Suppose there exist vertices \Emph{$w,x_1,y_1$} with $x_1,y_1\in N(w)$ and no
automorphism maps $x_1$ to $y_1$.  Let \Emph{$T$} be a breadth-first spanning
tree, rooted at $w$, with $x_1$ and $y_1$ as the first children of $w$ in $T$.
Color $w$ with $\Delta+1$, color $x_1$ and $y_1$ with 1, and color the rest of
$T$ greedily.  Now $w$ is fixed, since it is the only vertex colored $\Delta+1$
with two neighbors colored 1.  Vertices $x_1$ and $y_1$ are fixed, since no
automorphism maps one to the other, and all other neighbors of $w$ are fixed,
since they receive distinct colors.  All remaining
vertices are fixed by Lemma~\ref{mainlem}.
So $\chiD(G)\le \Delta+1$, a contradiction.

Instead, assume that for every vertex $w$ and $x_1,y_1\in N(w)$\aside{$w$,
$x_1$, $y_1$} some
automorphism maps $x_1$ to $y_1$.  Thus, for each pair of vertices
$x_1,y_1$ joined by a walk of even length, some automorphism maps $x_1$ to
$y_1$ (the proof is by induction on the length of the walk).  If $G$ is
bipartite, we are done; so assume it is not.  Let \Emph{$C$} be an odd cycle
in $G$.  Since $G$ is connected, for every pair of vertices $x_1,y_1\in V(G)$,
there exists an $x_1,y_1$-walk of even length.  Consider a walk from $x_1$ to
$y_1$ that visits $C$.  If the walk has odd length, then we extend it by going
around $C$ once.  Thus, $G$ is vertex-transitive, as desired.

\begin{claim}
$G$ is either the Petersen graph or the Heawood graph; see Figure~\ref{fig3}.
\label{clm6}
\end{claim}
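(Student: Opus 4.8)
The plan is to exploit the structure already forced on $G$: it is $3$-regular (Claims~\ref{clm1} and~\ref{clm4}), has girth at least $5$, and has diameter at most $3$ (Claim~\ref{clm2}). First I would fix a vertex $v$ and write $L_i$ for the set of vertices at distance $i$ from $v$. Since $G$ is $3$-regular with no triangle and no $4$-cycle, the usual count gives $\card{L_0}=1$, $\card{L_1}=3$, and $\card{L_2}=6$, with each vertex of $L_2$ having a unique neighbor in $L_1$. By Claim~\ref{clm2} there is no $L_4$, and every vertex of $L_3$ has all three of its neighbors in $L_2$. Let $a$ denote the number of edges inside $L_2$. Each of the $6$ vertices of $L_2$ sends two non-parent edges into $L_2\cup L_3$, so there are $12-2a$ edges between $L_2$ and $L_3$; on the other hand this count equals $3\card{L_3}$. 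Hence $\card{V(G)}=10+\card{L_3}=14-\tfrac23 a$, which forces $a\in\{0,3,6\}$ and $\card{V(G)}\in\{14,12,10\}$. Because $\card{V(G)}$ does not depend on the root, the value $a=\tfrac{42-3\card{V(G)}}{2}$ is the same from every vertex.

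The crux is to eliminate $\card{V(G)}=12$, and I would do this by double-counting the pairs $(w,e)$ where $e=xy$ is an edge with both endpoints in $L_2(w)$. Summing over roots gives $\sum_w a=\card{V(G)}\cdot a$. Summing instead over edges, fix $xy$ and a qualifying $w$: the common neighbor of $w$ and $x$ and the common neighbor of $w$ and $y$ are each unique (no $4$-cycle), are distinct (a coincidence would force a triangle on $x,y$), and together with $w,x,y$ form a $5$-cycle through $xy$; conversely each $5$-cycle through $xy$ arises exactly once this way (girth at least $5$ makes every $5$-cycle induced). Thus $\sum_w a=5N_5$, where $N_5$ is the number of $5$-cycles, the factor $5$ arising since each $5$-cycle is counted once per edge. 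For $\card{V(G)}=12$ we have $a=3$, so $5N_5=12\cdot3=36$, impossible since $5\nmid 36$. (As a sanity check, $\card{V(G)}=10$ gives $N_5=12$ and $\card{V(G)}=14$ gives $N_5=0$, matching the Petersen and Heawood graphs.) Therefore $\card{V(G)}\in\{10,14\}$.

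It remains to identify the two survivors. If $\card{V(G)}=10$ then $G$ has diameter $2$, and a $3$-regular graph of girth $5$ on $10$ vertices is the Petersen graph: this is the unique $(3,5)$-cage, and one can also reconstruct it directly, since $a=6$ forces the subgraph induced on $L_2$ to be a single $6$-cycle meeting the three parent-classes so as to avoid same-parent adjacencies. If $\card{V(G)}=14$ then $a=0$, so $G$ has no $5$-cycle and hence girth at least $6$; because a $3$-regular graph of girth at least $7$ needs at least $22$ vertices, $G$ has girth exactly $6$, and the unique $(3,6)$-cage on $14$ vertices is the Heawood graph. This gives the claim. Alternatively, one could route the last step through Claim~\ref{clm5}: a bipartite $G$ has even girth, hence $a=0$ and $\card{V(G)}=14$, forcing Heawood, while the remaining vertex-transitive non-bipartite possibility has girth $5$ and is killed by the count above.

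I expect the main obstacle to be the elimination of $\card{V(G)}=12$: everything hinges on getting the $5$-cycle correspondence exactly right, and it is precisely the no-$4$-cycle hypothesis that guarantees the relevant common neighbors are unique and that the resulting $5$-cycle is simple. The only other delicate point is invoking (or reproving) the uniqueness of the $(3,5)$- and $(3,6)$-cages; since the breadth-first structure above is extremely rigid, a self-contained reconstruction is routine should one prefer not to cite the cage results.
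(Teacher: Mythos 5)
Your proof is correct, but it takes a genuinely different route from the paper's at the decisive step. Both arguments start from the same breadth-first count (levels of sizes $1,3,6$ and a third level with all neighbors in level $2$, via Claim~\ref{clm2}), but the paper stops at $|V(G)|\le 14$ and then splits on girth: if the girth is at least $6$ it invokes uniqueness of the Heawood graph, and if there is a $5$-cycle it invokes Claim~\ref{clm5} to get vertex-transitivity and then checks McKay's catalogue of vertex-transitive graphs on at most $19$ vertices. You instead sharpen the level count to the exact identity $|V(G)|=14-\tfrac23 a$, where $a$ is the number of edges inside level $2$, which already restricts $|V(G)|$ to $\{10,12,14\}$, and you kill the case $|V(G)|=12$ by the double count $|V(G)|\cdot a=5N_5$; the bijection between pairs $(w,xy)$ with $x,y$ at distance $2$ from $w$ and pairs ($5$-cycle, edge) is exactly where girth at least $5$ is needed, and you justify it correctly (unique common neighbors, distinctness, no chords). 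The two survivors are then the $(3,5)$- and $(3,6)$-cages. What your route buys is independence from Claim~\ref{clm5} and from the external catalogue, at the price of citing (or reconstructing, as you sketch) the uniqueness of the Petersen graph as the $(3,5)$-cage; the paper already relies on the analogous uniqueness for the Heawood graph, so this is not an extra burden in kind. The paper's route is shorter on the page but leans on a computational reference; yours is self-contained and, as your sanity checks ($N_5=12$ for Petersen, $N_5=0$ for Heawood) confirm, numerically consistent.
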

Recall that $G$ is 3-regular, by Claims~\ref{clm1} and~\ref{clm4}.
Choose an arbitrary vertex $w$, and let $T$\aside{$w$, $T$} be a breadth-first
spanning tree rooted at $w$.  Since $G$ has girth at least 5, the number of
vertices at level 2 is $3(3-1)=6$.  By Claim~\ref{clm2}, $G$ has diameter at
most 3, so any additional vertices of $G$ are at level 3.  Further, by
Claim~\ref{clm2}, no pair of vertices at level
3 are adjacent.  So each vertex at level 3 has all three of its neighbors at level 2.
Since each of the 6 vertices at level 2 has at most two neighbors at level 3, the
number of vertices at level 3 is at most $\frac23(6)=4$.  Thus, $\card{V(G)}\le
1+3+6+4=14$.  
If $G$ has girth at least 6, then $G$ must be the Heawood graph, since it is
the unique 3-regular graph with girth at least 6 and at most 14 vertices.
Otherwise, $G$ has a 5-cycle.  So Claim~\ref{clm5} implies that $G$ is
vertex-transitive.  

To finish, we simply check that the only 3-regular vertex-transitive graphs
with girth at least 5 and at most 14 vertices are the Petersen graph and the
Heawood graph.  A convenient reference to verify this is~\cite{mckay79}, which
catalogues all vertex-transitive graphs on at most 19 vertices.

\begin{figure}[!b]
\centering
\begin{tikzpicture}[line width=0.8pt]
\tikzstyle{blavert}=[circle, draw, fill=white, inner sep=0pt, minimum width=11pt]

\begin{scope}[thick, scale=2, rotate = -30, xscale=-1]
\draw \foreach \i/\lab in {1/2, 2/1, 3/2, 4/4, 5/3, 6/2, 7/4, 8/2, 9/3}
{
(\i*360/9:1) node[blavert](v\i){\footnotesize $\lab$} -- (\i*360/9+360/9:1)
};
\draw node[blavert](v10){\footnotesize $1$};

\Edge[style={bend right}, labelstyle={auto=right, fill=none}](v1)(v5)
\Edge[style={bend right}, label = \small {}, labelstyle={auto=right, fill=none}](v4)(v8)
\Edge[style={bend right}, label = \small {}, labelstyle={auto=right, fill=none}](v7)(v2)
\Edge[](v3)(v10)
\Edge[](v6)(v10)
\Edge[](v9)(v10)
\end{scope}

\begin{scope}[xshift = 6.5cm, thick,scale=2, rotate=-3*360/14, xscale=-1]
\draw \foreach \i in {2, 4, ..., 14}
{
(\i*360/14:1) node[blavert]{} -- (\i*360/14+360/14:1)
(\i*360/14+360/14:1) node[blavert]{} -- (\i*360/14+720/14:1)
(\i*360/14:1)  -- (\i*360/14+5*360/14:1)
};
\draw \foreach \i/\lab in 
{1/2, 2/3, 3/2, 4/3, 5/2, 6/1, 7/2, 8/1, 9/4, 10/3, 11/2, 12/1, 13/2, 14/4}
{
(\i*360/14:1) node[]{\footnotesize $\lab$} 
};

\end{scope}
\end{tikzpicture}
\caption{Proper distinguishing 4-colorings of the Petersen graph and Heawood
graph.\label{fig3}}
\end{figure}
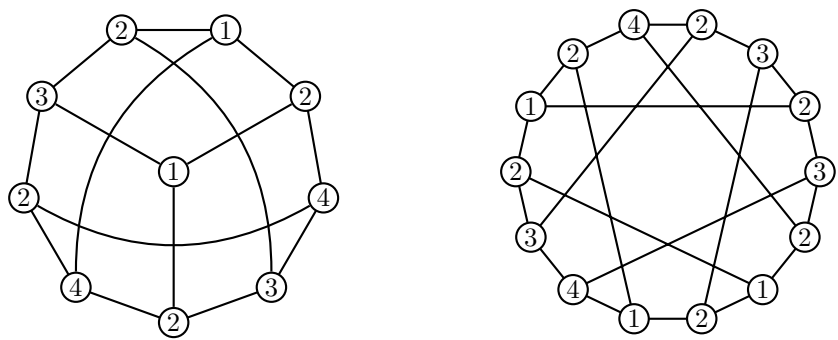

\begin{claim}
$G$ does not exist.
\label{clm7}
\end{claim}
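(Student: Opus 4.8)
The plan is to close the proof by contradiction. By Claim~\ref{clm6}, the minimal counterexample $G$ is either the Petersen graph or the Heawood graph; both are $3$-regular, so here $\Delta+1=4$. Figure~\ref{fig3} displays an explicit proper $4$-coloring of each graph, so it suffices to check that each coloring is proper and distinguishing. This yields $\chiD(G)\le 4=\Delta+1$, contradicting that $G$ is a counterexample, and hence proves that $G$ does not exist and completes the proof of Theorem~\ref{mainthm}.

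Properness is immediate from the figure, so the real task is to show that each coloring is distinguishing, i.e.\ that the identity is the only color-preserving automorphism. My strategy is to locate an \emph{anchor}---a small set of vertices that every color-preserving automorphism must fix pointwise---and then to propagate fixedness along edges. The propagation principle is the same one used in Lemma~\ref{mainlem}: once a vertex $x$ is fixed, any neighbor $v$ of $x$ that is distinguished from the other neighbors of $x$ (by its color, or by the set of already-fixed vertices it is adjacent to) must also be fixed. For the Petersen coloring the two vertices colored $1$ form the anchor: their neighborhoods carry the distinct color-multisets $\{2,2,4\}$ and $\{2,2,3\}$, so no color-preserving automorphism can interchange them, and both are fixed. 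For the Heawood coloring I take as anchor the two vertices colored $4$; their neighborhoods again carry distinct color-multisets ($\{1,3,3\}$ and $\{2,2,2\}$), so both are fixed.

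From each anchor, fixedness cascades to the whole graph. Working outward along the edges and repeatedly applying the propagation principle, one checks that each remaining vertex eventually acquires enough fixed neighbors (or lies in a color class all but one of whose vertices are already fixed) to be pinned down. The main obstacle is exactly this distinguishing verification: the two graphs have very large automorphism groups (of orders $120$ and $336$), so it is not enough to exhibit a single uniquely colored vertex, and one must confirm that the propagation never stalls with two interchangeable vertices remaining. This is where care is needed, but the small diameter of these graphs (at most $3$ by Claim~\ref{clm2}) keeps the cascade short, so after fixing the anchor the remaining checks are routine and finite.
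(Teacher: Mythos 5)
Your proposal takes essentially the same approach as the paper: exhibit the two explicit $4$-colorings of Figure~\ref{fig3}, check properness, and verify each is distinguishing by fixing an anchor set and propagating fixedness (your anchor neighborhoods $\{2,2,4\}$ vs.\ $\{2,2,3\}$ and $\{1,3,3\}$ vs.\ $\{2,2,2\}$ are correct). The only substantive difference is that you defer the propagation itself, which is where the paper's actual work lies---e.g.\ its observation that two Heawood vertices colored $2$ share the neighborhood multiset $\{1,1,3\}$ and must be separated by a second-order condition---but that verification is finite and does go through from your anchors.
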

Consider the 4-colorings in Figure~\ref{fig3} of the Petersen graph (left) and
Heawood graph (right).  Clearly, both colorings are proper.  In the coloring of
the Petersen graph, every two vertices with the same color have distinct
multisets of colors on their neighborhoods.  Thus, the coloring is
distinguishing.

Now consider the coloring of the Heawood graph.  The vertices colored 4 are
fixed, since the multisets on their neighborhoods differ.  The same is true for
the vertices colored 2, except for two with the multiset $\{1,1,3\}$.  But for
exactly one of these vertices the neighbor colored 3 has a neighbor colored 4.
Thus, the vertices colored 2 are fixed.  Now every other vertex has at least two
neighbors fixed.  Since $G$ has no 4-cycles, every vertex is fixed.  So the
coloring is distinguishing.  This proves claim~\ref{clm7}, which finishes
the proof of Theorem~\ref{mainthm}.
\end{proof}

We conclude with a few remarks.
The \emph{distinguishing list chromatic number}, \Emph{$\chiDl(G)$}, is the
smallest $k$ such that given any list assignment $L$ with $\card{L(v)}\ge k$
for all $v\in V(G)$, there exists a proper distinguishing coloring $\vph$ of
$G$ such that $\vph(v)\in L(v)$ for all $v\in V(G)$.
In general, $\chiDl(G)$ can be much larger than $\chiD(G)$.  For example, form a
graph $G$ from $K_{1000,1000}$ by adding a pendant edge at each vertex and
subdividing these edges so the resulting paths have distinct lengths.  The only
automorphism of $G$ is the identity, so $\chiD(G)=\chi(G)=2$.  However,
$\chiDl(G)$ is at least the list-chromatic number, which is greater than 2 (and
grows arbitrarily large on complete bipartite graphs).  In contrast, we note
that the bound on $\chiD$ in Proposition~\ref{prop1} also holds for $\chiDl$.

\begin{prop}
If $G$ is connected with girth at least 5, then $\chiDl(G)\le \Delta+2$.
\label{prop2}
\end{prop}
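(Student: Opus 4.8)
The plan is to adapt the proof of Proposition~\ref{prop1}, the only genuinely new difficulty being that we can no longer reserve a single \emph{global} color for the root. Since the lists may be pairwise disjoint, there need not be any color present at $w$ that even appears in the other lists, so the trick ``color $w$ with the extra color $\Delta+2$, which no other vertex uses'' has no direct list analogue. Instead I would reserve a color that is \emph{local} to $w$. Concretely, fix an arbitrary $w$ and a breadth-first spanning tree $T$ rooted at $w$ with vertex order $\sigma$, choose any $a\in L(w)$, set $\vph(w)=a$, and take $\sigma'=\{w\}$.

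Next I would color $\sigma\setminus\{w\}$ in the order $\sigma$ by the greedy rules (i) and (ii) of the Key Definition, but drawing each color from $L(v)$ and additionally forbidding the color $a$. The point is a counting check: a vertex colored by (i) forbids the colors on its at most $\Delta$ colored neighbors, and one colored by (ii) forbids the colors on its parent and at most $\Delta-1$ siblings; in either case adjoining $a$ to the forbidden set blocks at most $\Delta+1$ colors, so the hypothesis $\card{L(v)}\ge\Delta+2$ leaves at least one admissible color at every step. (For a child of $w$ the constraint ``avoid $a$'' is already implied by ``avoid the parent,'' so nothing is lost there either.) Hence the procedure never gets stuck, and by construction $a$ is used only on $w$.

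Because $a$ appears on no other vertex, $w$ is the unique vertex of its color and is therefore fixed, which supplies the hypothesis needed to invoke the fixing argument. I would then note that, although the coloring just built is not literally a greedy coloring in the sense of the Key Definition (we also forbade $a$), the proof of Lemma~\ref{mainlem} uses only two features of the coloring: that it is proper, and that any family of siblings all colored by rule (ii) receives pairwise distinct colors. Both remain true here — properness holds for the usual reason that a later neighbor of a rule-(ii) vertex is itself colored by rule (i) and so avoids it, and the distinctness of such siblings is unaffected by also forbidding $a$ — so the inductive argument of Lemma~\ref{mainlem} carries over and the coloring is distinguishing, giving $\chiDl(G)\le\Delta+2$.

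The step I expect to be the real obstacle is precisely the first one: recognizing that ``give $w$ a globally unique color'' cannot be imported from Proposition~\ref{prop1}, and that the correct list substitute is to forbid one root color \emph{everywhere else}, which is exactly what the slack in $\Delta+2$ (rather than $\Delta+1$) buys. After that, the only care needed is to state explicitly that Lemma~\ref{mainlem} still applies to this slightly modified greedy coloring, since the lemma as written concerns genuine greedy colorings; this verification is routine but should not be skipped.
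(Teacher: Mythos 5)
Your proof is correct and follows essentially the same route as the paper: the paper also picks $\alpha\in L(w)$, colors $w$ with $\alpha$, deletes $\alpha$ from every other list (which is exactly your ``forbid $a$ everywhere else''), and then colors greedily, citing Lemma~\ref{mainlem}. Your explicit counting check and your verification that the lemma's argument still applies are just a more careful write-up of what the paper calls ``straightforward to check.''
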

\begin{proof}
The proof is nearly identical to that of Proposition~\ref{prop1}.  
We choose an arbitrary vertex $w$, choose $\alpha\in L(w)$, color $w$ with
$\alpha$ and let $L'(v)=L(v)-\alpha$ for every other vertex $v$.
Now find a breadth-first spanning tree, $T$, rooted at $w$, and color greedily
from $L'$ with respect to $\sigma$.  It is straightforward to check that the
proof of Lemma~\ref{mainlem} still holds in this more general setting.
\end{proof}

\begin{obs}
All proofs in this paper are constructive, and imply algorithms to find the
colorings, and these algorithms can be easily implemented to run in
polynomial time.
\end{obs}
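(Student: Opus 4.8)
The plan is to walk through each construction in the paper, bound the running time of its basic operations, and then argue that assembling the constructions into a single recursive procedure costs only polynomially many invocations. First I would dispose of the building blocks. Computing a breadth-first spanning tree $T$ and its vertex order $\sigma$ takes $O(|V(G)|+|E(G)|)$ time, and a greedy coloring as in the Key Definition visits each vertex once, scanning either its neighborhood or its parent and siblings to select the smallest admissible color, for a total of $O(|V(G)|\,\Delta)$ time. Consequently Lemma~\ref{mainlem}, Proposition~\ref{prop1}, and Proposition~\ref{prop2} yield their colorings directly in polynomial time, since each is a greedy coloring after hand-coloring a prefix $\sigma'$ of constant size; the list variant of Proposition~\ref{prop2} needs no new idea, as it only replaces ``smallest admissible color'' by ``smallest admissible color in $L(v)$,'' which does not change the running time.

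The substantive part is Theorem~\ref{mainthm}, whose proof is phrased as a minimal counterexample. I would reinterpret the chain of Claims as a decision procedure. On input $G$, first compute $\Delta$ and verify connectivity, girth, and $G\ne C_6$; then test which claim's hypothesis holds. If $\Delta\le 2$ (Claim~\ref{clm0}), or $G$ is not $\Delta$-regular (Claim~\ref{clm1}), or $G$ has a vertex at distance at least $4$ from some root (Claim~\ref{clm2}), or $G$ has diameter $3$ with $\Delta\ge 4$ (Claim~\ref{clm3}), the corresponding construction outputs a $(\Delta+1)$-coloring directly. Each of these tests---degree sequence, all pairwise distances via BFS, diameter---is polynomial, and each construction is once more a greedy coloring with a constant-size modified prefix. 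The only recursive step is Claim~\ref{clm4}: when $\Delta\ge 4$ and $G$ has diameter $2$, we delete $w\cup N(w)$ and recurse on $G'$. Here I would observe that $G'$ is $(\Delta-1)$-regular, so a recursive call strictly decreases $\Delta$ (and $|V(G)|$); since a diameter-$2$, girth-$5$, $\Delta$-regular graph satisfies $|V(G)|=\Delta^2+1$, the recursion makes at most one call per invocation and bottoms out after $O(\Delta)=O(\sqrt{|V(G)|})$ levels, with polynomial work at each level.

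The main obstacle is that Claims~\ref{clm5} and~\ref{clm6} invoke automorphisms and graph isomorphism, which are not known to admit polynomial-time algorithms in general. The observation that resolves this is that these steps are reached only once $\Delta=3$, and at that point the counting in Claim~\ref{clm6} (level $2$ has $6$ vertices, level $3$ at most $4$) already forces $|V(G)|\le 14$. On a graph of at most $14$ vertices every operation used in Claims~\ref{clm5}--\ref{clm7}---deciding whether an automorphism maps one neighbor of $w$ to another, testing vertex-transitivity, and identifying $G$ as the Petersen or Heawood graph---is carried out by brute force in constant time, after which we output the explicit $4$-coloring of Figure~\ref{fig3}. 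Combining these observations, each invocation runs in polynomial time, there is at most one recursive call per invocation, and the recursion terminates after $O(\sqrt{|V(G)|})$ levels (or immediately, in the bounded-size base case), so the assembled algorithm runs in polynomial time overall. Since every step above produces an explicit coloring rather than merely asserting one exists, the same walkthrough also verifies that all the proofs are constructive.
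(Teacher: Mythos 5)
Your proposal is correct and follows essentially the same approach as the paper's (much terser) proof: the basic operations are BFS trees, greedy colorings, and all-pairs shortest paths, and the only potentially hard step---computing automorphisms in Claims~\ref{clm5}--\ref{clm7}---is handled by noting it is only ever needed on graphs with at most 14 vertices, where brute force over all vertex maps takes constant time. Your additional analysis of the recursion in Claim~\ref{clm4} is a worthwhile detail the paper leaves implicit, but it does not change the route.
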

\begin{proof}
We can easily check that a graph is regular, and compute a breadth-first
spanning tree.  To determine diameter, we use an algorithm for all-pairs
shortest path.  In general, computing the automorphisms of a graph can be
difficult.
However, we only need to do this for graphs on at most 14 vertices, so we can
finish in constant time, simply considering every possible map from $V(G)$ to
$V(G)$, and checking whether it is an automorphism.
\end{proof}

\begin{question}
When $G$ is connected with girth at least 5, is $\chiDl(G)\le \Delta+1$?
\label{ques1}
\end{question}

Two extensions of list chromatic number are online list chromatic number,
denoted
$\chi^{OL}$, and correspondence chromatic number, denoted $\chi^{corr}$;
see~\cite{Zhu-paint} and~\cite{DP-corr} for the definitions.  When we require
also that our colorings be distinguishing, we get the parameters $\chi^{OL}_D$
and $\chi^{corr}_D$.  It is natural to study the maximum values of these
parameters on certains classes of graphs.  In particular, what are the maximum
values on connected graphs with girth at least 5?  Does the bound of
Proposition~\ref{prop2}, or even Question~\ref{ques1}, still hold?

\subsection*{Acknowledgments}
The author is grateful for his support by NSA grant H98230-16-0351.
He also thanks Howard Community College for its hospitality during the
preparation of this paper.

\bibliographystyle{plain}
\bibliography{GraphColoring1}
\end{document}